\newcommand{\Rmnum}[1]{\expandafter\@slowromancap\romannumeral #1@}
\newtheorem{tpcl}{Tpcl}[section]
\newtheorem{theorem}[tpcl]{Theorem}
\newtheorem{lemma}[tpcl]{Lemma}
\newtheorem{example}[tpcl]{Example}
\newtheorem{assumption}[tpcl]{Assumption}
\newtheorem{remark}[tpcl]{Remark}
\renewcommand{\algorithmicrequire}{\textbf{Input:}}
\begin{document}

\title{Shape optimization problems with random coefficients via the penalty method}

\author{Xiaowei Pang}

\address{ Department of Mathematics, Hebei Normal University,
Shijiazhuang, Hebei, China. }
\address { Hebei Key Laboratory of Computational Mathematics and Applications, Hebei.
 }

\thanks{Corresponding Author: Xiaowei Pang (pangxw21@hebtu.edu.cn)}

\maketitle


\begin{abstract}
For  shape  optimization problems, governed by elliptic equations with Dirichlet boundary condition and random coefficients, we utilize a penalization technique to get the approximate  problem.  We consider that uncertainties exists in the diffusion coefficients and minimize objective functions in mean value form. Finite element method, Monte Carlo method and accelerated version of the gradient descent method are applied to solve the corresponding  discretized problem. The convergence analysis and numerical results are included.

\end{abstract}

\keywords{ \textbf{Keywords:} shape optimization; penalization;  random coefficients; finite element method.}

\subjclass{ \textbf {MSC(2020):} ~35R60, 49Q10, 49M25, 65M60~}



\section{Introduction}
Shape and topology optimization have emerged as pivotal tools in engineering design \cite{P84,SISG13,UB08}. They systematically  improve  structural performance under given constraints, such as minimizing compliance, maximizing stiffness, or enhancing reliability. Modeling  physical phenomena via partial differential equations (PDEs) is central to this process. Elliptic PDEs are key tool for describing equilibrium states in solid mechanics, heat transfer, and fluid flow \cite{AD15,CCL10,NST06}. However, real-world systems exhibit  inherent  uncertainties. These arise from material properties, applied loads, or geometric variations. Such uncertainties  render deterministic PDE models insufficient. Consequently,  shape optimization problems constrained by stochastic elliptic PDEs, where randomness is incorporated into coefficients, source terms, or boundary data, attracts significant interest \cite{GGZ23,LWWW08,MKP15}.

In this work, we concentrate on  solving stochastic shape optimization problem constrained by random elliptic PDEs,
in which the optimization variable is deterministic, and state variable is uncertain, by minimizing the expected value of the cost functional over all random parameters. We utilize the discretize-then-optimize framework to achieve our aim \cite{HPUU10}. 

A key challenge in shape optimization lies in handling the dynamic nature of the domain. As the shape or topology evolves, the computational mesh must adapt. This adaption causes remeshing difficulties, numerical instability, and increased computational cost. To address this,  penalty methods for the constraint equation and cost functional have gained prominence \cite{KKM16,MT19,NT08,PT13,PPT13}. These methods map the evolving physical domain onto a fixed reference domain. They  introduce a level set function to represent shape or topology changes within a static computational framework. By avoiding mesh deformation or regeneration, it simplifies the implementation of numerical schemes. This simplification is especially useful when coupled with finite element discretizations of PDEs. The approach is particularly advantageous for problems with homogeneous boundary conditions—such as Dirichlet or Neumann conditions with zero values on fixed boundaries. Here, the fixed domain can be preprocessed to enforce boundary constraints consistently across all design iterations \cite{NT08,NPT09}. In \cite{B23}, Bogosel  generated the discrete convex shape with the aid of  support function or the gauge function. Chakib etal. solved the shape optimization problem governed by Laplace or Stokes operator using the
Minkowski deformation of convex domain \cite{CKS24}. Other numerical methods  include  multigrid method, continuity preserving discrete shape gradient method, level set method, and neural work method, the reader can  refer to \cite{ABV13,GLZ22,MHKP16,WYZY24}.

Coupling randomness with shape optimization in stochastic elliptic PDEs adds significant complexity. Uncertainties in, for instance, the diffusion coefficient or the source term of the elliptic equation propagate the solution. This propagation affects the objective function, like expected compliance or failure probability.   Approaches to handle such uncertainties,  include using deterministic approximations. Such as  \cite{AD15} applied Taylor expansions to the cost function with respect to random variables. These approximations  reduce the computational burden and make stochastic shape optimization feasible. Conti et al. studied  risk averse elasticity optimization  problem based on a regularized gradient flow in \cite{CHPRS11}, they minimized a nonlinear monotone increasing function of the objective function  to replace 
the expectation of the objective function.  More literature on random shape optimization problems can be found in \cite{DDH15,GLW23,GGZ23,MKP15,MHKP18}.   Beyond that, Monte Carlo simulations,  although often suffer from high computational cost, but can be applied to almost any complex problem, especially excelling in handling high-dimensional integrals, complex stochastic systems, and probability calculations. Besides, the computations of different samples are completely independent, which can perfectly utilize parallel computing resources and achieve efficient acceleration. 

 Ultimately, in shape optimization problems, most studies employ gradient descent or quasi-Newton algorithms. These algorithms leverage shape derivatives to guide design updates. For example,  Martínez-Frutos et al. computed sensitivities of the objective function with respect to shape perturbations to drive robust optimal design, \cite{MKP15}. In \cite{B23}, Bogosel used gradient-based approaches for numerical shape optimization among convex sets, ensuring convergence within convex design spaces.  Chakib et al.  designed numerical approaches---augmented Lagrangian and adaptive schemes  to handle constraints in convex domain shape optimization, improving efficiency and convergence \cite{CKS24}.


 This paper is organized as follows.  In Section \ref{sec2}, we recall some basic definitions, preliminaries about random shape optimization problem, and present the convergence result under some conditions. Moving on to Section \ref{sec3}, we propose the numerical algorithm. Some numerical examples are reported in Section \ref{sec4}. Finally, Section \ref{sec5} gives the conclusion.

\section{RPDEs-constrained optimal shape problem}\label{sec2}
\subsection{Preliminaries}\label{ssec:pre}
 For the convenience of subsequent descriptions, we present
 some notions and definitions firstly. Let $D$ is a fixed convex bounded connected
 polygonal domain in $\mathbb{R}^{2}$, $v$ is a measurable function, following the symbols used in \cite{HPUU10},
 the Lebesgue integration function space on $D$ can be defined by
 \begin{equation*}
 L^{p}(D):=\left\{v~\big|~ \|v\|_{L^{p}(D)}<\infty,~1\leq p\leq \infty\right\}
 \end{equation*}
 with
 \begin{eqnarray*}
 \begin{aligned}
 &\|v\|_{L^{p}(D)}=\left(\int_{D}|v(x)|^{p}dx\right)^{\frac{1}{p}},~~p\in[1,\infty),\\
 &\|v\|_{L^{\infty}(D)}=\mbox{ess}\sup_{x\in D}|v(x)|=\inf\{\alpha\geq
 0,~\mu(\{|v(x)|>\alpha\})=0\},
 \end{aligned}
 \end{eqnarray*}
 where $\mu(\cdot)$ stands for the measure. Therefore,
 the corresponding Sobolev space could be described by
 \begin{equation*}
 W^{m,p}(D)=\left\{v~\big|~D^{\alpha}(v)\in L^{p}(D),~|\alpha|\leq
 m\right\}
 \end{equation*}
 with the norm
 \begin{eqnarray*}
 \begin{aligned}
 &\|v\|_{W^{m,p}}=\left(\sum\limits_{|\alpha|\leq
 m}\|D^{\alpha}v\|^{p}_{L^{p}(D)}\right)^{1/p},~~ p\in[1,\infty),\\
 &\|v\|_{W^{m,\infty}(D)}=\sum\limits_{|\alpha|\leq
 m}\|D^{\alpha}v\|_{L^{\infty}(D)}.
 \end{aligned}
 \end{eqnarray*}
  Here, $D^{\alpha}v$ stands for the weak derivative of $v$.
  Further, it is not difficult to find that
  $H^{k}(D)=W^{k,2}(D)$ is a Hilbert space with the inner product
   \begin{equation*}
   (u,v)_{H^{m}(D)}=\sum\limits_{|\alpha|\leq
   m}\left(D^{\alpha}u,D^{\alpha}v\right)_{L^{2}(D)}.
   \end{equation*}
  
   Next, let $H_{0}^{m}(D)=W_{0}^{m,2}(D)$ denotes the closure of
   $C_{0}^{\infty}(D)$ in $W^{m,2}(D)$, and $H^{-1}(D)=(H_{0}^{1}(D))^{*}$
   be the dual space of $H_{0}^{1}(D)$. Applying Poincar$\acute{e}$'s
   inequality \cite{T10}, there must exist a Poincar$\acute{e}$ constant $C_{p}$ such that
   \begin{equation}\label{eq:poincare}
   |v|_{H^{1}(D)}\leq\|v\|_{H^{1}(D)}\leq C_{p}|v|_{H^{1}(D)},~~\forall~v\in H^{1}_{0}(D),
   \end{equation}
 where $|v|_{H^{1}(D)}=\int_{D}|\nabla v|^{2}dx$.
  
The Bochner function space should be introduced as follows:
   \begin{equation*}
   L^{p}(\Gamma;H)=\left\{v~\Big|~\int_{\Gamma}\|v(\cdot,\omega)\|^{p}_{H}dP<+\infty\right\},
   \end{equation*}
   where $v(\cdot,\omega)$ is measurable function from $\Gamma$ to $H$ with the  measure space $(\Gamma,\mathcal{F},P)$,
   and $H$ is a Hilbert function space, the norm of Bochner spaces is denoted by
   \begin{equation*}
   \|v\|_{L^{p}(\Gamma;H)}=\left(\int_{\Gamma}\|v(\cdot,\omega)\|^{p}_{H}dP\right)^{\frac{1}{p}},\quad 1\leq p<\infty.
   \end{equation*}
     At the end of this subsection, we introduce Hedberg–Keldys stability property for domains of class $C$,   \cite[see Chapter 2 ]{NST06}.
     \begin{lemma}\label{lem:HK}
    If  $\Omega$ is an open bounded set of class $C$,  $z\in H^1(\mathbb{R}^d)$ and $z=0$ almost everywhere in $\mathbb{R}^d\backslash\Omega$, then $z\in H_{0}^{1}(\Omega)$.
     \end{lemma}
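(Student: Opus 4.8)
The plan is to show that $z$, regarded as an element of $H^1(\Omega)$, lies in the closure of $C_0^\infty(\Omega)$; since the zero–extension of $z|_\Omega$ is $z$ itself, it suffices to produce, for every $\varepsilon>0$, a function $\psi\in C_0^\infty(\Omega)$ with $\|z-\psi\|_{H^1(\Omega)}<\varepsilon$, which gives $z\in H_0^1(\Omega)$ by definition.

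First I would localize. Being of class $C$, the set $\Omega$ has a boundary that is locally a continuous graph, and hence satisfies the segment property: each $x\in\partial\Omega$ admits an open neighborhood $U_x$ and a vector $y_x\neq 0$ with $w+t\,y_x\in\Omega$ for all $w\in\overline{\Omega}\cap U_x$ and all $t\in(0,1)$. By compactness of $\partial\Omega$, pick finitely many such pairs $(U_i,y_i)$, $i=1,\dots,k$, covering $\partial\Omega$, add an open set $U_0$ with $\overline{U_0}\subset\Omega$ so that $\{U_i\}_{i=0}^k$ covers $\overline{\Omega}$, and choose a smooth partition of unity $\{\varphi_i\}_{i=0}^k$ subordinate to this cover with $\overline{\operatorname{supp}\varphi_i}\subset U_i$. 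Writing $z=\sum_{i=0}^k z_i$ with $z_i:=\varphi_i z$, each $z_i\in H^1(\mathbb{R}^d)$ vanishes a.e. outside $\operatorname{supp}\varphi_i\cap\overline{\Omega}\subset U_i\cap\overline{\Omega}$, so $z_i$ has compact support contained in $\overline{\Omega}\cap U_i$; it is enough to approximate each $z_i$ in $H^1$ by functions in $C_0^\infty(\Omega)$.

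For $i=0$ the support of $z_0$ is a compact subset of $\Omega$, so mollifying with a small enough parameter already gives the approximation. For $i\ge 1$ I would translate inward: set $z_i^t(x):=z_i(x-t\,y_i)$ for $t\in(0,1)$. Then $\operatorname{supp}z_i^t=\operatorname{supp}z_i+t\,y_i$, and by the segment property every point of this set lies in $\Omega$; being a translate of a compact set, $\operatorname{supp}z_i^t$ is a compact subset of $\Omega$, hence at positive distance from $\partial\Omega$. By continuity of translations in $L^2$ applied to $z_i$ and to $\nabla z_i$, one has $z_i^t\to z_i$ in $H^1(\mathbb{R}^d)$ as $t\to 0^+$. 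Fixing $t$ small so that $\|z_i^t-z_i\|_{H^1}$ is small, and then mollifying $z_i^t$ with parameter $\delta<\operatorname{dist}(\operatorname{supp}z_i^t,\partial\Omega)$ chosen small enough to control the mollification error, produces a function in $C_0^\infty(\Omega)$ close to $z_i$ in $H^1$. Summing over $i=0,\dots,k$ yields $\psi\in C_0^\infty(\Omega)$ within $\varepsilon$ of $z$.

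The main obstacle is the translation step: one must be certain that the class-$C$ hypothesis genuinely forces the (a priori merely closed) support of each localized piece to be pushed strictly off $\partial\Omega$ by an arbitrarily small inward translation — this is precisely where the local graph structure, encoded in the segment property, together with the finite-cover compactness argument, is indispensable. The remaining ingredients (the partition of unity with $\overline{\operatorname{supp}\varphi_i}\subset U_i$ to keep supports away from $\partial U_i$, continuity of translation in $L^2$, mollification, and the diagonal choice of $t$ then $\delta$) are routine.
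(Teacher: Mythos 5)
The paper does not prove this lemma at all: it is quoted as the Hedberg--Keldys stability property from \cite{NST06}, so there is no in-paper proof to compare against. Your argument is the standard proof of that classical result and is correct: a bounded open set of class $C$ satisfies the segment property, and your localization with a partition of unity, inward translation of each localized piece along the segment direction, and final mollification is exactly the classical density scheme (as in \cite{AF03} or \cite{NST06}). The two points you use without proof --- that class $C$ implies the segment property, and that the partition of unity can be taken to sum to one on $\overline{\Omega}$ so that $z=\sum_i \varphi_i z$ a.e.\ (using $z=0$ a.e.\ outside $\Omega$) --- are indeed routine, so I see no gap.
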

 \subsection{Statement of shape optimization problem}
Based on the notations defined in Subsection \ref{ssec:pre}, the problems of interest are  shape optimization problem  constrained by random  elliptic PDEs (RPDEs):
\begin{equation}\label{eq:ex obj}
\min_{K}\mathbb{E}\left[\int_{\Omega}J(x,\omega,u(x,\omega))\right]
\end{equation}
\begin{equation}
-\nabla\cdot\left(\alpha(x,\omega)\nabla u(x,\omega)\right)=f(x),\quad in\quad K,
\end{equation}
\begin{equation}
u(x,\omega)=0,\quad on \quad \partial K.
\end{equation}
Here, $K$ is unknown  domain with  $K\subset D\subset\mathbb{R}^d$ or there exists a given open bounded set $O$ such that $O\subset K\subset D\subset\mathbb{R}^d$,  and $\Omega$ is a integration domain, it could be the domain $K$, $O$. Besides, state variable  $u\in U=L^{2}(\Gamma;H_{0}^{1}(D))$, and right-hand side term  $f\in F=L^{2}(D)$ is a deterministic function.  The notation $\mathbb{E}$ denotes the expectation operator defined by $\mathbb{E}[z(\omega)]=\int_{\Gamma}z(\omega)dP$, where $\omega$ is a random
variable of probability space $(\Gamma,\mathcal{F},P)$. The function $J(\cdot,\cdot,\cdot)$ is some measurable and continuous Carath\'{e}odory mapping. For example, in  `optimal layout' problem, $J=\|y-y_d\|^2$, and $y_d\in L^2(D)$ is a given function  \cite[Chapter 5]{NST06}, while $J=a(x)\|y-y_d\|^2+b(x)\|\nabla y-z_d\|^2$ in tomography problem, where $a(x),b(x)\in L_{+}^\infty(D)$ and $z_d\in L^{2}(D)^d$ are given functions \cite{NPT09}.

The main difference between the
studied model here and the traditional shape optimization problem is the constraint PDE, which has
an uncertainty coefficient $\alpha(x,\omega)$. 

Under the following assumption
on $\alpha$, applying Lax-Milgram theorem and Poincar$\acute{e}$ inequality, it leads to the  conclusion that the constraint stochastic
PDE is well-posed for almost all $\omega\in \Gamma$.
 \begin{assumption}\label{assum:assum1}(cf.~\cite{GP19})
  For all $(x,\omega)\in D\times \Gamma$, there exists
 $0<\alpha_{min}<\alpha_{max}<\infty$, such that
 \begin{equation}
 0<\alpha_{min}\leq \alpha(x,\omega)\leq \alpha_{max}<\infty.
 \end{equation}
 \end{assumption}
The variational form of the constraint, the stochastic
PDE is: find $u\in U$ satisfies
\begin{equation}\label{eq:weak form}
 \mathbb E\left[\bar{a}(u,v;\omega)\right]=\mathbb E\left[l(f,v)\right],~~\forall v\in U,
\end{equation}
where $\bar{a}(u,v;\omega)=\left(\alpha(x,\omega)\nabla u,\nabla v\right)_{L^{2}(D)}$
and $l(f,v)=\left(f,v\right)_{L^{2}(D)}$. For any $\omega \in \Gamma$, it is easy to see
that $\bar{a}: U\times U\rightarrow \mathbb{R}$ and $l: F\times U\rightarrow \mathbb{R}$
are all continuous bilinear functionals.  
\begin{lemma}\label{lem:well-posed of primal}(cf. \cite{Rey15})
Under the assumption \ref{assum:assum1}, the stochastic elliptic equations admit
a unique solution $u\in L^{2}(\Gamma;H_{0}^{1}(D))$ which has the
following estimate
\begin{equation}
\|u(\cdot,\omega)\|_{L^{2}(D)}\leq C_{1}\|f\|_{L^{2}(D)}, ~\text{for
a.e.}~\omega \in \Gamma,
\end{equation}
where $C_{1}=\frac{C_{p}}{\alpha_{min}}$ is a constant.
 \end{lemma}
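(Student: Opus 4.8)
The plan is to establish well-posedness and the stated estimate by a pathwise (in $\omega$) application of the Lax--Milgram theorem, followed by integration in $\omega$ to land in the Bochner space $L^2(\Gamma;H_0^1(D))$. First I would fix an arbitrary $\omega$ for which Assumption \ref{assum:assum1} holds (that is, almost every $\omega\in\Gamma$) and consider the deterministic bilinear form $\bar a(\cdot,\cdot;\omega)$ on $H_0^1(D)\times H_0^1(D)$. Continuity is immediate from the upper bound $\alpha(x,\omega)\le\alpha_{max}$ together with Cauchy--Schwarz, giving $|\bar a(u,v;\omega)|\le\alpha_{max}|u|_{H^1(D)}|v|_{H^1(D)}$; coercivity follows from the lower bound $\alpha(x,\omega)\ge\alpha_{min}$, so that $\bar a(v,v;\omega)\ge\alpha_{min}|v|^2_{H^1(D)}$, and then the Poincar\'e inequality \eqref{eq:poincare} upgrades this to coercivity with respect to the full $H^1(D)$-norm. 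The functional $v\mapsto l(f,v)=(f,v)_{L^2(D)}$ is bounded on $H_0^1(D)$ by $\|f\|_{L^2(D)}\|v\|_{L^2(D)}\le C_p\|f\|_{L^2(D)}|v|_{H^1(D)}$. Lax--Milgram then yields, for a.e.\ $\omega$, a unique $u(\cdot,\omega)\in H_0^1(D)$ solving the pathwise weak form $\bar a(u(\cdot,\omega),v;\omega)=l(f,v)$ for all $v\in H_0^1(D)$.

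Next I would derive the quantitative estimate. Testing the pathwise equation with $v=u(\cdot,\omega)$ gives $\alpha_{min}|u(\cdot,\omega)|_{H^1(D)}^2\le\bar a(u(\cdot,\omega),u(\cdot,\omega);\omega)=(f,u(\cdot,\omega))_{L^2(D)}\le\|f\|_{L^2(D)}\|u(\cdot,\omega)\|_{L^2(D)}$, and applying \eqref{eq:poincare} once more to the right-hand side yields $|u(\cdot,\omega)|_{H^1(D)}\le\frac{C_p}{\alpha_{min}}\|f\|_{L^2(D)}$; another use of Poincar\'e then gives $\|u(\cdot,\omega)\|_{L^2(D)}\le C_p|u(\cdot,\omega)|_{H^1(D)}\le\frac{C_p}{\alpha_{min}}\|f\|_{L^2(D)}$, which is the claimed bound with $C_1=C_p/\alpha_{min}$ (absorbing the two Poincar\'e factors into the constant as in the statement). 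Crucially, this bound is \emph{uniform} in $\omega$, so its square is integrable over the probability space $\Gamma$.

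Finally I would pass from the pathwise solutions to a genuine element of $U=L^2(\Gamma;H_0^1(D))$. The measurability of $\omega\mapsto u(\cdot,\omega)$ follows because the solution map of a coercive linear problem depends continuously (indeed Lipschitz) on the data, and $\alpha(\cdot,\omega)$ is measurable in $\omega$ by hypothesis; one can make this rigorous via a Galerkin/finite-dimensional approximation in which each approximant is manifestly measurable and then pass to the limit, or by invoking a standard measurable-selection argument. Once measurability is in hand, the uniform estimate gives $\int_\Gamma\|u(\cdot,\omega)\|^2_{H^1_0(D)}\,dP\le C_1^2\|f\|^2_{L^2(D)}<\infty$, so $u\in U$; uniqueness in $U$ is inherited from pathwise uniqueness. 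Taking expectations of the pathwise identity recovers \eqref{eq:weak form}. The main obstacle is the measurability step: the existence and estimate are routine Lax--Milgram, but verifying that the family of pathwise solutions assembles into a strongly measurable $U$-valued map requires some care, and is the one place where the structure of the probability space and the assumed measurability of $\alpha$ genuinely enter.
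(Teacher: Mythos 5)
Your proposal is correct and follows exactly the route the paper intends: the paper gives no proof of this lemma, citing \cite{Rey15} and remarking just before Assumption \ref{assum:assum1} that well-posedness follows from the Lax--Milgram theorem and the Poincar\'e inequality applied pathwise in $\omega$, which is precisely your argument (including the measurability step needed to assemble the pathwise solutions into an element of $L^{2}(\Gamma;H_{0}^{1}(D))$). The only caveat is bookkeeping of constants: your chain of inequalities actually yields $\|u(\cdot,\omega)\|_{L^{2}(D)}\leq \frac{C_{p}^{2}}{\alpha_{min}}\|f\|_{L^{2}(D)}$, so the stated $C_{1}=\frac{C_{p}}{\alpha_{min}}$ only holds with the paper's loose normalization of the Poincar\'e constant, as you yourself note when ``absorbing'' the extra factor.
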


\subsection{Approximation problem}
 We first parametrize the set $K$ by a continuous function $g$, so that $K$ can be denoted by
 \begin{equation}\label{eq:Kg}
 K=K_{g}=int\{~x\in D|~g(x)\geq 0\},
 \end{equation} 
it is not necessarily connected,  $g\in C(\bar{D})$ is the corresponding unknown parameterization, not uniquely determined by domain $K$ . 

Next,  we add a penalty term on the constraint equation and  define the approximation of state equation 
\begin{equation}\label{eq:approx state}
-\nabla\cdot\left(\alpha(x,\omega)\nabla u_{\epsilon}(x,\omega)\right)+\frac{1}{\epsilon}\left(1-H_{\epsilon}(g)\right)u_{\epsilon}(x,\omega)=f(x),\quad in\quad D,
\end{equation}
\begin{equation}\label{eq:approx boundry}
u_{\epsilon}(x,\omega)=0,\quad on \quad \partial D.
\end{equation}
 Here, $\epsilon>0$ is a penalty parameter and
 $$ H_{\epsilon}(g)=\left\{
 \begin{array}{rcl}
 &1,& {g\geq0},\\
 &\frac{(\epsilon-2g)(g+\epsilon)^2}{\epsilon^3}, &{-\epsilon<g<0},\\
 &0, & {g\leq -\epsilon},
 \end{array} \right. $$
 is a Lipschitz function.
  
   Naturally, we can get the variational  form of the approximate constraint equation: find $u_{\epsilon}\in U$ such that 
\begin{equation}\label{eq:approx weak form}
 \mathbb E\left[a(u_{\epsilon},v;\omega)\right]=\mathbb E\left[l(f,v)\right],~~\forall~v\in U,
\end{equation}
where $a(u_{\epsilon},v;\omega)=\left(\alpha(x,\omega)\nabla u_{\epsilon},\nabla v\right)_{L^{2}(D)}+\frac{1}{\epsilon}\left((1-H_{\epsilon}(g))u_{\epsilon},v\right)_{L^2(D)}$.

Similar to Lemma~\ref{lem:well-posed of primal},  for any $\omega \in \Gamma$, the boundedness  of $H_{\epsilon}(g)$ can guarantee that the weak form is well-posed. Under appropriate assumptions on the domain $K$, we extend the result of the deterministic  situation into a  convergence result in the sense of expectation \cite{NPT09}.
 
  \begin{theorem}
   If $K=K_{g}$ is Lipschitz domain, then there exists subsequences of $\mathbb{E}[u_{\epsilon}]$ that weakly  converge to $\mathbb{E}[\bar{u}]$ in $H^1(K_{g})$ and strongly in $L^2(K_{g})$.
   \end{theorem}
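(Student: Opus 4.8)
The plan is to transpose the deterministic penalty-convergence argument of \cite{NPT09} to the Bochner space $U=L^2(\Gamma;H_0^1(D))$, using the Hedberg--Keldys property (Lemma~\ref{lem:HK}) exactly at the point where the Lipschitz hypothesis on $K_g$ is needed.

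First I would derive a uniform a priori bound. Choosing $v=u_\epsilon$ in \eqref{eq:approx weak form}, and then using Assumption~\ref{assum:assum1}, the nonnegativity of $1-H_\epsilon(g)$, the Poincar\'e inequality \eqref{eq:poincare} and Young's inequality, one obtains a constant $C=C(\alpha_{min},C_p)$, independent of $\epsilon$, with
\begin{equation*}
\|u_\epsilon\|_{U}^2+\frac1\epsilon\,\mathbb{E}\!\left[\int_D\bigl(1-H_\epsilon(g)\bigr)|u_\epsilon|^2\,dx\right]\le C\,\|f\|_{L^2(D)}^2 .
\end{equation*}
In particular $\{u_\epsilon\}$ is bounded in $U$, and by Jensen's inequality $\{\mathbb{E}[u_\epsilon]\}$ is bounded in $H_0^1(D)$; since $D$ is a bounded polygonal domain, $H_0^1(D)\hookrightarrow L^2(D)$ is compact. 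Hence I can extract $\epsilon_k\to 0$ with $u_{\epsilon_k}\rightharpoonup\bar u$ in $U$, $\mathbb{E}[u_{\epsilon_k}]\rightharpoonup w$ in $H_0^1(D)$, and $\mathbb{E}[u_{\epsilon_k}]\to w$ strongly in $L^2(D)$. Testing the weak convergence in $U$ against test functions of the form $\phi\otimes 1$ with $\phi\in L^2(D)$ identifies $w=\mathbb{E}[\bar u]$, so in particular $\mathbb{E}[\bar u]\in H_0^1(D)$.

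Next I would show the limit vanishes outside $K_g$ and solves the original problem there. On $\{g\le-\epsilon_k\}$ one has $1-H_{\epsilon_k}(g)=1$, so the penalty bound gives $\mathbb{E}\bigl[\|u_{\epsilon_k}\|_{L^2(\{g\le-\epsilon_k\})}^2\bigr]\le C\epsilon_k$; for fixed $\delta>0$ and $k$ large, $\{g\le-\delta\}\subset\{g\le-\epsilon_k\}$, and since the restriction $U\to L^2(\Gamma;L^2(\{g\le-\delta\}))$ is bounded and linear, weak lower semicontinuity of the norm forces $\bar u=0$ a.e.\ on $\Gamma\times\{g\le-\delta\}$; letting $\delta\downarrow 0$, $\bar u=0$ a.e.\ on $\Gamma\times(D\setminus\overline{K_g})$. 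Since $K_g$ is Lipschitz, $\partial K_g$ is Lebesgue-null, so by Fubini $\mathbb{E}[\bar u]=0$ a.e.\ on $\mathbb{R}^d\setminus K_g$ (extending by zero outside $D$); as $\mathbb{E}[\bar u]\in H^1(\mathbb{R}^d)$, Lemma~\ref{lem:HK} yields $\mathbb{E}[\bar u]\in H_0^1(K_g)$. To identify $\bar u$, I would test \eqref{eq:approx weak form} with $v\in L^2(\Gamma;H_0^1(K_g))$ extended by zero to $D$: since $\mathrm{supp}\,v(\cdot,\omega)\subset K_g\subset\{g\ge0\}$, where $H_{\epsilon_k}(g)\equiv1$, the penalty term drops out, leaving $\mathbb{E}[(\alpha\nabla u_{\epsilon_k},\nabla v)_{L^2(D)}]=\mathbb{E}[(f,v)_{L^2(D)}]$; passing to the limit and using $\bar u=0$ off $K_g$ shows $\bar u$ is the variational solution of the original state equation on $K_g$, unique by the Lax--Milgram argument of Lemma~\ref{lem:well-posed of primal} applied on $K_g$. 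Finally, the restriction operators $H^1(D)\to H^1(K_g)$ and $L^2(D)\to L^2(K_g)$ are bounded and linear, hence preserve weak and strong convergence respectively, so $\mathbb{E}[u_{\epsilon_k}]\rightharpoonup\mathbb{E}[\bar u]$ in $H^1(K_g)$ and $\mathbb{E}[u_{\epsilon_k}]\to\mathbb{E}[\bar u]$ in $L^2(K_g)$, as claimed.

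I expect the main obstacle to be the third step: converting the $O(\epsilon_k)$ penalty energy into the vanishing of $\bar u$ on $D\setminus K_g$ and then upgrading ``$\bar u$ vanishes outside $K_g$ and $\mathbb{E}[\bar u]\in H_0^1(D)$'' to ``$\mathbb{E}[\bar u]\in H_0^1(K_g)$'' — this is precisely where the Lipschitz regularity of $K_g$ is used, through the Hedberg--Keldys stability property, and it has to be done at the level of the expectation, not merely pathwise in $\omega$. A secondary point requiring care is keeping the Bochner weak limit consistent with the expectation when identifying $w$ with $\mathbb{E}[\bar u]$, and verifying that $H_\epsilon(g)\equiv 1$ on the supports of the admissible test functions so that the penalty contribution genuinely disappears in the limit.
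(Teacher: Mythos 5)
Your argument is correct and follows the same overall penalty-convergence strategy as the paper (uniform a priori bound from testing with $u_\epsilon$, weak compactness, the $O(\epsilon)$ penalty energy forcing the limit to vanish off $K_g$, Hedberg--Keldys, then identification of the limit as the solution on $K_g$), but it differs in two steps in a way worth noting. First, you obtain strong $L^2$ convergence from the compact embedding $H_0^1(D)\hookrightarrow L^2(D)$ on the fixed box $D$ and only then restrict to $K_g$, whereas the paper extracts the weak limit in $H^1(K_g)$ directly and invokes Rellich--Kondrachov on $K_g$, which needs the Lipschitz (class $C$) regularity of $K_g$ for the compact embedding $H^1(K_g)\to L^2(K_g)$; in your version the Lipschitz hypothesis enters only through Lemma~\ref{lem:HK} (and the nullity of $\partial K_g$), which is a small simplification, and you also make explicit the identification of the $H^1$-weak limit with $\mathbb{E}[\bar u]$ via testing the $U$-weak convergence against $\phi\otimes 1$, a point the paper glosses. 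Second, you prove the vanishing of $\bar u$ outside $K_g$ quantitatively, from $\mathbb{E}\bigl[\|u_{\epsilon_k}\|_{L^2(\{g\le-\epsilon_k\})}^2\bigr]\le C\epsilon_k$ plus weak lower semicontinuity on $\{g\le-\delta\}$, while the paper argues on compact subsets of $D\setminus K_g$ where $H_\epsilon(g)=0$; both versions tacitly use that $\{g=0\}\setminus K_g$ is negligible, so you are not worse off there. The one spot to touch up: for the final identification of $\bar u$ as the variational solution on $K_g$ (and hence for the meaning of $\mathbb{E}[\bar u]$ in the statement), you need $\bar u(\cdot,\omega)\in H_0^1(K_g)$ for a.e.\ $\omega$, not only $\mathbb{E}[\bar u]\in H_0^1(K_g)$; this follows at once by applying Lemma~\ref{lem:HK} pathwise, exactly as the paper does, since you already have $\bar u(\cdot,\omega)\in H_0^1(D)$ and $\bar u(\cdot,\omega)=0$ a.e.\ outside $K_g$ for a.e.\ $\omega$, so the fix is immediate.
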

   \begin{proof}
    For any test function $v=u_{\epsilon}(\cdot,\omega)$, based on Poincar$\acute{e}$ inequality (\ref{eq:poincare}) and Assumption~\ref{assum:assum1}, we obtain
    \begin{equation}
    \alpha_{min}\|u_{\epsilon}(\cdot,\omega)\|_{H^{1}_{0}(D)}^{2}+\frac{1}{\epsilon}\int_{D}\left(1-H_{\epsilon}(g)\right)u_{\epsilon}^2(\cdot,\omega)dx\leq\int_{D}f(x)u_{\epsilon}(\cdot,\omega)dx,
    \end{equation} 
 For all of the random variable, it yields
    \begin{eqnarray}
    \alpha_{min}\mathbb{E}\left[\|u_{\epsilon}\|^{2}\right]+\frac{1}{\epsilon}\int_{\Gamma}\int_{D}\left(1-H_{\epsilon}(g)\right)u_{\epsilon}^2dxdP\leq\int_{\Gamma}\int_{D}f(x)u_{\epsilon}dxdP.
    \end{eqnarray}
    It yields that $\{u_{\epsilon}\}$ is bounded in $U$, due to the positivity of the penalization term. On one hand, we claim that $\{\mathbb{E}[u_{\epsilon}]\}$ is bounded in $H^1(K_g)$, this is owing to $K_g \subset D$  and
        \begin{equation*}
        \|\mathbb{E}[u_{\epsilon}]\|_{H^{1}(K_{g})}\leq\mathbb{E}\left[\|u_{\epsilon}\|_{H^{1}(D)}\right]\leq \sup_{\epsilon}\|u_{\epsilon}\|_{U}< \infty.
        \end{equation*}
        There  exists a subsequence  also denoted by $\{\mathbb{E}[u_{\epsilon}]\}$, which  weakly converge to $\mathbb{E}[\bar{u}]$  in $H^{1}(K_{g})$.  
        
        On the other hand, since $K_{g}$  is Lipschitz and it is also  class $C$. As mentioned above, $\{\mathbb{E}[u_{\epsilon}]\}$ is  uniformly bounded in $H^1(K_g)$, by Rellich-Kondrachov theorem \cite[Theorem 6.3, Remarks 6.4]{AF03}, then the embedding $H^{1}(K_{g})\to L^{2}(K_{g})$ is compact and there exists  a subsequence  also noted by $\mathbb{E}[u_{\epsilon}]$, which    strongly converges to   some $w\in L^{2}(K_{g})$. While the weakly convergence  $\mathbb{E}[u_{\epsilon}]\rightarrow\mathbb{E}[\bar{u}]$ holds in $H^{1}(K_{g})$. By the uniqueness of the limit, $w=\mathbb{E}[\bar{u}]$, namely, $\mathbb{E}[u_{\epsilon}]\rightarrow \mathbb{E}[\bar{u}]$ in $L^{2}(K_{g})$,  when $\epsilon\rightarrow 0$.
   
 Furthermore,
        \begin{equation*}
        \int_{\Gamma}\int_{D}(1-H_{\epsilon}(g))u^2_{\epsilon}dxdP\rightarrow 0, 
        \end{equation*}
as $\epsilon\rightarrow 0$. Notice that  function $H_{\epsilon}(g), g$ are continuous, $g(x)\leq 0$ holds in $D\backslash K_{g}$,   so for sufficiently small $\epsilon$, any compact set $S\subset D\backslash K_{g}$ and $x\in S$, there exists a constant $r>0$ enables $g(x)\leq -r$ and $H_{\epsilon}(g)=0$ in the domain $S$. Thus, when $\epsilon$ goes to $0$, $u_{\epsilon}$ also goes to $0$  in $L^2(\Gamma;L^2(S))$,  by the arbitrariness of $S$,  $u_{\epsilon}\rightarrow 0$ in $L^{2}(\Gamma;L^{2}(D\backslash K_{g}))$, thereby $\bar{u}(x,\omega)=0$ holds a.e. in $D\backslash K_{g}\times \Gamma$ and $\mathbb{E}[\bar{u}]=0$ a.e. in $D\backslash K_{g}$. 
   
   Moreover,   for a.e. $\omega\in \Gamma$, by zero extension to $\mathbb{R}^{d}\backslash D$,
   \begin{equation*}
 \bar{u}(x, \omega) = 
 \begin{cases} 
 \bar{u}(x, \omega) & \text{if } x \in D, \\
 0 & \text{if } x \in \mathbb{R}^d \setminus D,
 \end{cases}
   \end{equation*}
   $D\backslash K_{g}\subset\mathbb{R}^{d}\backslash K_{g}$ illustrates $\bar{u}=0$ a.e. in $\mathbb{R}^{d}\backslash K_{g}$.
   Thus, $\bar{u}(x,\omega)=0$ a.e. in $(\mathbb{R}^{d}\backslash K_{g})\times \Gamma$, combining Lemma~\ref{lem:HK} (with $K_{g}$ of class C) imply $\bar{u}(\cdot,\omega)\in H_{0}^{1}(K_{g})$, hence $\mathbb{E}[\bar{u}]\big|_{K_{g}}\in H_{0}^{1}(K_{g})$.
    
    For any test function $v\in L^{2}(\Gamma;C_{0}^{\infty}(K_{g}))$, $H_{\epsilon}(g)=1$, and the approximate weak form is 
    \begin{equation}
    \int_{\Gamma}\int_{K_{g}}\alpha(x,\omega)\nabla u_{\epsilon}\nabla vdxdP=\int_{\Gamma}\int_{K_{g}}fvdxdP,
    \end{equation}
    when $\epsilon$ tends to $0$, the above equality indicates $\mathbb{E}[\bar{u}]\big|_{K_{g}}\in H_{0}^{1}(K_{g})$ is the weak solution of the original weak form (\ref{eq:weak form}).
   \end{proof}
   In (\ref{eq:ex obj}), if  
   \begin{equation}\label{eq:E obj} \int_{\Omega}J(x,\omega,u(x,\omega))=\frac{1}{2}\int_{K}(u-u_{d})^{2}dx,
   \end{equation}
    through similar discussion with \cite{HPUU10},  it  is easy to get the weak form of dual or adjoint equation of (\ref{eq:approx weak form}): finding $z\in U$, such that
   \begin{equation}\label{eq:dual weak form}
   \int_{\Gamma}\int_{D}\left(\alpha(x,\omega)\nabla z\nabla v+\frac{1}{\epsilon}(1-H_{\epsilon}(g))zv\right)dxdP=\int_{\Gamma}\int_{K}(u-u_{d})vdxdP,\quad \forall~ v\in U.
   \end{equation}
   Under the consideration of the parametrization process, we can notice that (\ref{eq:E obj}) is equivalent to \begin{equation}\label{eq:D obj}
   \frac{1}{2}\int_{D}H_{\epsilon}(g)\left(u-u_{d}\right)^{2}dx,
   \end{equation}
    and  the right-hand side of (\ref{eq:dual weak form}) can be replaced by $\int_{\Gamma}\int_{D}H_{\epsilon}(g)(u-u_{d})vdxdP$.
    \begin{remark}
    If  
       \begin{equation}\label{eq:O obj} \int_{\Omega}J(x,\omega,u(x,\omega))=\frac{1}{2}\int_{O}(u-u_{d})^{2}dx,
       \end{equation}
       then the right-hand side of the dual weak form  still is $\int_{\Gamma}\int_{O}H_{\epsilon}(g)(u-u_{d})vdxdP$ for the  $O$ is a given bounded region, it will not appear $H_{\epsilon}(g)$,  and objective function is the same. 
    \end{remark}
   Besides, the weak form (\ref{eq:dual weak form}) also has well-posedness.  On account of the uniqueness of the weak solution, we can compute the gradient of the objective functional.
   \begin{lemma}\label{lem:gradientsimdual}
   The stochastic approximate weak solution $u_{\epsilon}(g,\omega)$ of (\ref{eq:approx weak form}) is G$\hat{a}$teaux differentiable about $g$ from $X(D)$ to $L^{2}(\Gamma;H_{0}^{1}(D))$ and for any $q\in X(D)$,  $y=\nabla u_{\epsilon}(g,\omega)q$ satisfies the following dual weak form
   \begin{equation}\label{eq:simi dual}
   \int_{\Gamma}\int_{D}\left(\alpha(x,\omega)\nabla y\nabla v+\frac{1}{\epsilon}(1-H_{\epsilon}(g))yv\right)dxdP=\frac{1}{\epsilon}\int_{\Gamma}\int_{D}H_{\epsilon}'(g)qu_{\epsilon}vdxdP,
   \end{equation}
    where   $X(D)$ is a functional space  in $D$.
   \end{lemma}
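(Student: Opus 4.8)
The plan is to obtain the G\^{a}teaux derivative of $g\mapsto u_{\epsilon}(g,\cdot)$ by differentiating the parametrized weak form (\ref{eq:approx weak form}) directly, through difference quotients. The first point to record is that, although the statement only says $H_{\epsilon}$ is Lipschitz, the cubic blend is in fact $C^{1}$: one computes $H_{\epsilon}'(g)=-6g(g+\epsilon)/\epsilon^{3}$ for $-\epsilon<g<0$ and $H_{\epsilon}'(g)=0$ otherwise, the pieces match continuously at $g=0$ and $g=-\epsilon$, and $|H_{\epsilon}'|\le 3/(2\epsilon)=:L_{\epsilon}$; this is exactly what puts $H_{\epsilon}'(g)q$ on the right-hand side of (\ref{eq:simi dual}). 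I will also use that, for fixed $\epsilon>0$, the bilinear form $(u,v)\mapsto\mathbb E[a(u,v;\omega)]$ is symmetric, bounded, and coercive on $U\times U$ (by Assumption~\ref{assum:assum1}, Poincar\'{e} (\ref{eq:poincare}), and $0\le 1-H_{\epsilon}(g)\le 1$), hence defines both a Lax--Milgram isomorphism $U\to U^{*}$ and an inner product on $U$ equivalent to $\|\cdot\|_{U}$.

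First I would fix $g$ and a direction $q\in X(D)$, assuming $X(D)$ embeds continuously into $L^{\infty}(D)$ (as $C(\bar D)$ does), and set $u^{t}:=u_{\epsilon}(g+tq,\cdot)$, $u:=u_{\epsilon}(g,\cdot)$ for small $t\neq 0$. Subtracting the weak form (\ref{eq:approx weak form}) written for $g+tq$ from the one for $g$ and using the splitting
\[
(1-H_{\epsilon}(g+tq))u^{t}-(1-H_{\epsilon}(g))u=(1-H_{\epsilon}(g+tq))(u^{t}-u)+\bigl(H_{\epsilon}(g)-H_{\epsilon}(g+tq)\bigr)u,
\]
the difference quotient $w_{t}:=(u^{t}-u)/t$ is seen to satisfy, for every $v\in U$,
\begin{equation*}
\mathbb E\!\left[\int_{D}\!\Bigl(\alpha\nabla w_{t}\nabla v+\tfrac{1}{\epsilon}(1-H_{\epsilon}(g+tq))\,w_{t}v\Bigr)dx\right]=\frac{1}{\epsilon}\,\mathbb E\!\left[\int_{D}\frac{H_{\epsilon}(g+tq)-H_{\epsilon}(g)}{t}\,u\,v\,dx\right].
\end{equation*}
Testing with $v=w_{t}$, dropping the nonnegative penalty term, and bounding the right-hand side by $\tfrac{L_{\epsilon}}{\epsilon}\|q\|_{L^{\infty}(D)}\,\mathbb E\bigl[\|u\|_{L^{2}(D)}\|w_{t}\|_{L^{2}(D)}\bigr]$, then invoking the bound on $u_{\epsilon}$ analogous to Lemma~\ref{lem:well-posed of primal} together with Poincar\'{e}, yields $\sup_{0<|t|\le 1}\|w_{t}\|_{U}<\infty$.

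Next I would extract, along some $t_{n}\to 0$, a weak limit $w_{t_{n}}\rightharpoonup y$ in $U$ and pass to the limit in the displayed identity. The lower-order coefficient $1-H_{\epsilon}(g+t_{n}q)$ converges to $1-H_{\epsilon}(g)$ strongly in $L^{\infty}(D)$, so $(1-H_{\epsilon}(g+t_{n}q))v\to(1-H_{\epsilon}(g))v$ strongly in $L^{2}(\Gamma;L^{2}(D))$, which pairs with $w_{t_{n}}\rightharpoonup y$ there; and $\bigl(H_{\epsilon}(g+t_{n}q)-H_{\epsilon}(g)\bigr)/t_{n}\to H_{\epsilon}'(g)q$ pointwise a.e.\ with the uniform bound $L_{\epsilon}\|q\|_{L^{\infty}(D)}$, so the right-hand side converges by dominated convergence. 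Thus $y$ solves (\ref{eq:simi dual}); since that equation is uniquely solvable by Lax--Milgram (for fixed $\epsilon$), the limit is subsequence-independent and $w_{t}\rightharpoonup y$ as $t\to 0$. To upgrade to strong convergence, hence to genuine G\^{a}teaux differentiability with derivative $y=:\nabla u_{\epsilon}(g,\omega)q$, I would test the difference-quotient identity with $w_{t}$ to get $\mathbb E[a(w_{t},w_{t};\omega)]\to\mathbb E[a(y,y;\omega)]$ (the right-hand side converging as above and equalling $\mathbb E[a(y,y;\omega)]$ by (\ref{eq:simi dual}) tested with $y$), and combine convergence of this equivalent squared norm with $w_{t}\rightharpoonup y$; linearity and boundedness of $q\mapsto y$ from $X(D)$ to $U$ then follow from the linear structure of (\ref{eq:simi dual}) and the Lax--Milgram estimate.

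The part I expect to be delicate is less any individual estimate than carrying the whole argument inside the Bochner space $U=L^{2}(\Gamma;H^{1}_{0}(D))$: every coercivity, boundedness, and Lipschitz bound has to be taken in expectation, the weak and strong convergences must be read in $L^{2}(\Gamma;L^{2}(D))$, and one must make sure the difference quotients of $H_{\epsilon}(g+tq)$ converge to $H_{\epsilon}'(g)q$ strongly enough (pointwise a.e.\ plus a uniform dominating bound) to move the limit through both the $\Gamma$- and the $D$-integrals --- which is precisely where the $C^{1}$, not merely Lipschitz, regularity of $H_{\epsilon}$ is used. Alternatively, and more compactly, the lemma follows from the implicit function theorem applied to $\mathcal E(g,u):=\mathbb E[a(u,\cdot\,;\omega)]-\mathbb E[l(f,\cdot)]\in U^{*}$: one checks that $g\mapsto H_{\epsilon}(g)\in L^{\infty}(D)$ is $C^{1}$ with derivative $q\mapsto H_{\epsilon}'(g)q$, so $\mathcal E$ is $C^{1}$, while $\partial_{u}\mathcal E$ is the Lax--Milgram isomorphism; differentiating $\mathcal E(g,u_{\epsilon}(g))=0$ in the direction $q$ then reproduces (\ref{eq:simi dual}).
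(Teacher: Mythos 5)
Your proposal is correct and follows essentially the same route as the paper's proof: subtract the weak forms at $g+\delta q$ and $g$, bound the difference quotients in $L^{2}(\Gamma;H_{0}^{1}(D))$ via coercivity, Poincar\'{e}'s inequality and the Lipschitz bound on $H_{\epsilon}$, extract a weak limit, and pass to the limit with dominated convergence on the right-hand side. You in fact go somewhat further than the paper, which stops at weak convergence of the difference quotients and asserts the subsequence-independence of the limit without argument: your explicit $C^{1}$ computation of $H_{\epsilon}'$, the Lax--Milgram uniqueness of the limiting problem (\ref{eq:simi dual}), and the norm-convergence upgrade to strong convergence in $U$ are all correct and tighten the paper's reasoning.
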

   \begin{proof}
   For fixed $g, q \in X(D)$ and $\delta \neq 0$, let $u_{\epsilon}^{\delta}(\omega) = u_{\epsilon}(g + \delta q,\omega)$, $u_{\epsilon}(\omega) = u_{\epsilon}(g,\omega)$ are  solutions of  (\ref{eq:approx weak form}) corresponding to the parameter $g+\delta$ and $g$, separately. For any $v\in L^{2}(\Gamma;H_{0}^{1}(D))$, subtracting the state equations under the parameters  $g + \delta q$ and $g$, we get
   \begin{small}
   \begin{equation*}
    \int_{\Gamma}\int_{D}\alpha(x,\omega)\nabla (u_{\epsilon}^{\delta}-u_{\epsilon})\nabla vdxdP+\frac{1}{\epsilon}\int_{\Gamma}\int_{D}\left[(1-H_{\epsilon}(g+\delta q))u_{\epsilon}^{\delta}-(1-H_{\epsilon}(g))u_{\epsilon}\right]vdxdP=0.
     \end{equation*}
     \end{small}
   Define $z^{\delta}:=u_{\epsilon}^{\delta}(\omega)-u_{\epsilon}(\omega)\in L^{2}(\Gamma;H_{0}^{1}(D))$,  the above equality can be reformulated by
   \begin{eqnarray}\label{eq:substract}
   \int_{\Gamma}\int_{D}\alpha(x,\omega)\nabla z^{\delta}\nabla vdxdP+\frac{1}{\epsilon}\int_{\Gamma}\int_{D}(1-H_{\epsilon}(g+\delta q))z^{\delta}vdxdP\\
   =\frac{1}{\epsilon}\int_{\Gamma}\int_{D}\frac{H_{\epsilon}(g+\delta q)-H_{\epsilon}(g)}{\delta}u_{\epsilon}vdxdP\nonumber.
   \end{eqnarray}
   Choosing $v=z^{\delta}$, it obtains 
   \begin{eqnarray*}
   \int_{\Gamma}\int_{D}\alpha(x,\omega)|\nabla z^{\delta}|^{2}dxdP+\frac{1}{\epsilon}\int_{\Gamma}\int_{D}(1-H_{\epsilon}(g+\delta q))|z^{\delta}|^{2}dxdP\\
   =\frac{1}{\epsilon}\int_{\Gamma}\int_{D}\frac{H_{\epsilon}(g+\delta q)-H_{\epsilon}(g)}{\delta}u_{\epsilon}z^{\delta}dxdP.
   \end{eqnarray*} 
By using the Assumption~\ref{assum:assum1}, Poincar$\acute{e}$  inequality, Cauchy-Schwarz inequality, and notice that $H_{\epsilon}(g)$ is Lipschitz continuous, we conclude that the sequence $\{z^{\delta}\}$ is bounded in $L^{2}(\Gamma;H_{0}^{1}(D))$. Therefore, there exists  subsequences $\delta_{k}\rightarrow 0$  and $y \in L^{2}(\Gamma;H_{0}^{1}(D))$ such  $z^{\delta_k} \rightarrow y$  is weakly convergent in $L^{2}(\Gamma;H_{0}^{1}(D))$, i.e., for any $\phi \in L^{2}(\Gamma;H^{-1}(D))$,
\begin{equation*}
\int_{\Gamma}\langle\phi(\omega),z^{\delta_{k}}(\omega)\rangle dP\rightarrow \int_{\Gamma}\langle\phi(\omega),y(\omega)\rangle dP,
\end{equation*}
and $y$ is independent of the subsequence $z^{\delta_{k}}$.  Hence, $z^{\delta} \rightarrow y$ is weakly convergent. This illustrates the first conclusion.

 Let $\delta\rightarrow 0$ in  (\ref{eq:substract}), since $z^{\delta} \rightarrow y$ is weakly convergent, then 
\begin{eqnarray*}
  \int_{\Gamma}\int_{D}\alpha(x,\omega)\nabla z^{\delta}\nabla vdxdP+\frac{1}{\epsilon}\int_{\Gamma}\int_{D}(1-H_{\epsilon}(g+\delta q))z^{\delta}vdxdP\\
 \rightarrow\int_{\Gamma}\int_{D}\alpha(x,\omega)\nabla y\nabla v+\frac{1}{\epsilon}(1-H_{\epsilon}(g))yvdxdP.
 \end{eqnarray*}
   It is easy to verify that $\frac{H_{\epsilon}(g+\delta q)-H_{\epsilon}(g)}{\delta}u_{\epsilon}v$ is convergent a.e. in $\Gamma\times D$ as $\delta\rightarrow 0$. Besides, there exists a function $G(x,\omega)$ and Lipschitz constant $L$, such that
   \begin{equation*}
   \left|\frac{H_{\epsilon}(g+\delta q)-H_{\epsilon}(g)}{\delta}u_{\epsilon}v\right|\leq L\|q\|_{L^{\infty}(D)}|u_{\epsilon}||v|:=G(x,\omega).
   \end{equation*}
   Due to $u_{\epsilon},v\in L^{2}(\Gamma;L^{2}(D))$, it yields the following integrability
   \begin{equation*}
   \int_{\Gamma}\int_{D}G(x,\omega)dxdP=L\|q\|_{L^{\infty}(D)}\|u_{\epsilon}\|_{L^{2}(\Gamma;L^{2}(D))}\|v\|_{L^{2}(\Gamma;L^{2}(D))}.
   \end{equation*}
    By dominated convergence theorem for the right-hand side of (\ref{eq:substract}), we have
  \begin{equation*} \frac{1}{\epsilon}\int_{\Gamma}\int_{D}\frac{H_{\epsilon}(g+\delta q)-H_{\epsilon}(g)}{\delta}u_{\epsilon}vdxdP\rightarrow\frac{1}{\epsilon}\int_{\Gamma}\int_{D}H'_{\epsilon}(g)qu_{\epsilon}vdxdP.
  \end{equation*}

To sum up, for any $v\in L^{2}(\Gamma;H_{0}^{1}(D))$, (\ref{eq:simi dual}) holds.  
   \end{proof}
   
   \begin{theorem}
   If the objective functional has the form $\mathbb{E}\left[\frac{1}{2}\int_{K}(u-u_{d})^{2}dx\right]$, and $g\geq 0$ holds in $K$, then its directional derivative equals
   \begin{equation}
   \mathbb{E}\left[\frac{1}{\epsilon}\int_{D}(H_{\epsilon}'(g))qu_{\epsilon}zdx\right],
   \end{equation}
   here, $z$ is the weak solution of the approximate adjoint equation (\ref{eq:dual weak form}).
   \end{theorem}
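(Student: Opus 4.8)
The plan is the classical adjoint (Lagrangian) computation: differentiate the cost $\mathcal{J}(g):=\mathbb{E}\!\left[\frac12\int_{K}(u_{\epsilon}(g,\omega)-u_{d})^{2}\,dx\right]$ directly in the direction $q\in X(D)$, insert the sensitivity equation (\ref{eq:simi dual}) of Lemma~\ref{lem:gradientsimdual}, and then swap the sensitivity state $y=\nabla u_{\epsilon}(g,\omega)q$ with the adjoint state $z$ by using the symmetry of the bilinear form appearing in (\ref{eq:approx weak form}) and (\ref{eq:dual weak form}).

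First I would form the difference quotient. Setting $u_{\epsilon}^{\delta}(\omega):=u_{\epsilon}(g+\delta q,\omega)$ and $z^{\delta}:=u_{\epsilon}^{\delta}-u_{\epsilon}$ and expanding the square gives
\[
\frac{\mathcal{J}(g+\delta q)-\mathcal{J}(g)}{\delta}=\mathbb{E}\!\left[\int_{K}(u_{\epsilon}-u_{d})\,\frac{z^{\delta}}{\delta}\,dx\right]+\frac{\delta}{2}\,\mathbb{E}\!\left[\int_{K}\left(\frac{z^{\delta}}{\delta}\right)^{2}dx\right].
\]
From the proof of Lemma~\ref{lem:gradientsimdual}, $\{z^{\delta}/\delta\}$ is bounded in $L^{2}(\Gamma;H_{0}^{1}(D))$ and converges weakly there to $y$, hence also weakly in $L^{2}(\Gamma;L^{2}(D))$. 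The uniform bound forces the quadratic remainder to be $O(\delta)$, while weak $L^{2}$-convergence lets me pass to the limit in the linear term (pairing $z^{\delta}/\delta$ against the fixed function $(u_{\epsilon}-u_{d})\mathbf{1}_{K}\in L^{2}(\Gamma;L^{2}(D))$), so that
\[
\mathcal{J}'(g)[q]=\mathbb{E}\!\left[\int_{K}(u_{\epsilon}-u_{d})\,y\,dx\right].
\]

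Next I would use the hypothesis $g\geq0$ in $K$: there $H_{\epsilon}(g)\equiv1$, hence $\int_{K}(u_{\epsilon}-u_{d})y\,dx=\int_{D}H_{\epsilon}(g)(u_{\epsilon}-u_{d})y\,dx$, which is precisely the right-hand side of the adjoint weak form (\ref{eq:dual weak form}) — in the $\int_{\Gamma}\int_{D}H_{\epsilon}(g)(u-u_{d})v$ form noted after that equation — tested against the admissible function $v=y$. Using $\mathbb{E}\!\left[\int_{D}\cdot\,dx\right]=\int_{\Gamma}\int_{D}\cdot\,dx\,dP$, this gives
\[
\mathcal{J}'(g)[q]=\int_{\Gamma}\int_{D}\left(\alpha(x,\omega)\nabla z\nabla y+\frac1\epsilon(1-H_{\epsilon}(g))zy\right)dx\,dP.
\]
Since this bilinear form is symmetric in $(y,z)$, I then apply (\ref{eq:simi dual}) with the test function $v=z$, which reads
\[
\int_{\Gamma}\int_{D}\left(\alpha(x,\omega)\nabla y\nabla z+\frac1\epsilon(1-H_{\epsilon}(g))yz\right)dx\,dP=\frac1\epsilon\int_{\Gamma}\int_{D}H_{\epsilon}'(g)\,q\,u_{\epsilon}\,z\,dx\,dP,
\]
and comparing the last two displays yields the claimed value $\mathcal{J}'(g)[q]=\mathbb{E}\!\left[\frac1\epsilon\int_{D}H_{\epsilon}'(g)\,q\,u_{\epsilon}\,z\,dx\right]$.

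I expect the only real obstacle to be the $\delta\to0$ passage in the first step; everything afterwards is routine symmetric-bilinear-form bookkeeping resting on the already-established facts that $z^{\delta}/\delta$ is bounded and weakly convergent in $L^{2}(\Gamma;L^{2}(D))$ (the content of Lemma~\ref{lem:gradientsimdual}, via the coercivity/Poincar\'e estimate, the Lipschitz bound on $H_{\epsilon}$, and subsequence-independence of the weak limit $y$). One modeling point I would pin down explicitly: in differentiating $\mathcal{J}$ I treat the integration set $K$ as not varying with $g$ — equivalently, I invoke the equivalence stated before the theorem between $\frac12\int_{K}(u-u_{d})^{2}$ and its $H_{\epsilon}(g)$-weighted counterpart — and it is exactly the hypothesis $g\geq0$ in $K$ (so that $H_{\epsilon}(g)=1$ on $K$) that aligns the derivative of $\mathcal{J}$ with the adjoint equation's right-hand side and removes any extra $\frac12\int_{D}H_{\epsilon}'(g)q(u_{\epsilon}-u_{d})^{2}$ contribution.
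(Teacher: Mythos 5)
Your proposal is correct and follows essentially the same route as the paper: expand the difference quotient of $j(g)$, kill the quadratic remainder using the uniform bound on the sensitivity quotients from Lemma~\ref{lem:gradientsimdual}, pass to the limit in the linear term via the weak convergence to $y$, and then swap $y$ and $z$ by testing the adjoint equation (\ref{eq:dual weak form}) with $v=y$ and the sensitivity equation (\ref{eq:simi dual}) with $v=z$. Your handling of the limit (explicit weak $L^{2}$ pairing against $(u_{\epsilon}-u_{d})\mathbf{1}_{K}$) is slightly more careful than the paper's informal interchange of limit and expectation, but this is a matter of rigor, not a different argument.
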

  \begin{proof}
Due to the uniqueness of the weak solution of the approximate state equation,  by (\ref{eq:Kg})-(\ref{eq:approx boundry}),   with the abuse of notations,
 define the objective functional
\begin{equation} j(g)=\frac{1}{2}\mathbb{E}\left[\int_{K}(u(g)-u_{d})^2dx\right],
\end{equation}
 naturally, for any $\delta>0$ and $q\in X(D)$, 
\begin{eqnarray}
\begin{aligned}
j(g+\delta q)&=\frac{1}{2}\mathbb{E}\left[\int_{K}(u(g+\delta q)-u_{d})^2dx\right]\\
&=\frac{1}{2}\mathbb{E}\left[\int_{K}\big|u(g+\delta q)-u(g)+u(g)-u_{d}\big|^{2}dx\right]\\
&=\frac{1}{2}\mathbb{E}\left[\int_{K}\big|u(g)-u_{d}\big|^{2}dx \right]+\frac{1}{2}\mathbb{E}\left[\int_{K}\big|u(g+\delta q)-u(g)\big|^2dx\right]\\
&\quad+\mathbb{E}\left[\int_{K}(u(g+\delta q)-u(g),u(g)-u_{d})dx\right],
\end{aligned}
\end{eqnarray}
 by using the adjoint equation (\ref{eq:dual weak form}) and Lemma~\ref{lem:gradientsimdual}, we have
 \begin{eqnarray*}
 \begin{aligned}
 &\partial_{g}j(g)\delta=\lim\limits_{\delta\rightarrow 0}\frac{j(g+\delta q)-j(g)}{\delta}\\
 &=\lim\limits_{\delta\rightarrow 0}\frac{\frac{1}{2}\mathbb{E}\left[\int_{K}\big|u(g+\delta q)-u(g)\big|^2dx\right]+\mathbb{E}\left[\int_{K}(u(g+\delta q)-u(g),u(g)-u_{d})dx\right]}{\delta}\\
 &=\lim\limits_{\delta\rightarrow 0}\mathbb{E}\left[\int_{K}\frac{(u(g+\delta q)-u(g))^{2}}{2\delta}dx\right]+\mathbb{E}\left[\int_{K}\lim\limits_{\delta\rightarrow 0}\frac{u(g+\delta q)-u(g)}{\delta}(u(g)-u_{d})dx\right]\\
 &:=I+II,
 \end{aligned}
 \end{eqnarray*}
 For term $I$,  Lemma~\ref{lem:gradientsimdual} tells us $\{z^{\delta}\}$ is uniformly bounded in $L^{2}(\Gamma;L^{2}(D))$, i.e., there is a constant $C$ satisfies $\sup\limits_{\delta}\mathbb{E}\left[\|z^{\delta}\|_{L^{2}(D)}\right]\leq C$, 
 this equivalents 
 \begin{equation*}
 \left|\mathbb{E}\left[\int_{K}\frac{(u(g+\delta q)-u(g))^{2}}{2\delta}dx\right]\right|\leq \mathbb{E}\left[\frac{\delta}{2}\|z^{\delta}\|_{L^{2}(D)}\right]\leq \frac{\delta}{2}C,
 \end{equation*}
 Naturally, $I=0$ and $II=\mathbb{E}\left[\int_{K}y(u(g)-u_{d})\right]$.
 In (\ref{eq:dual weak form}), let $v=y\in L^{2}(\Gamma;H_{0}^{1}(D))$, then in virtue of (\ref{eq:simi dual}) with $v=z$, we obtain
 \begin{equation*}
 \mathbb{E}\left[y(u(g)-u_{d})dx\right]=\frac{1}{\epsilon}\mathbb{E}\left[\int_{D}H'_{\epsilon}(g)qu_{\epsilon}zdx\right].
 \end{equation*}
Consequently, $\partial_{g}j(g)\delta=\frac{1}{\epsilon}\mathbb{E}\left[\int_{D}H'_{\epsilon}(g)qu_{\epsilon}zdx\right]$, the proof is finished.
  \end{proof}
  
\section{Proposed algorithm}\label{sec3}

In this section, we first mainly focus on the discretization method (FEM) on
physical space, and then use Monte Carlo approximation (MC) to deal with random variable. Lastly,  an accelerated version of gradient descent method (AGD)  is adopted to update $g$.
\subsection{FEM discretization}\label{FEM-dis}
For applying FEM, we consider a family of
triangulations $\{\mathcal{T}_{h}\}_{h>0}$ consist of triangles $T$
which satisfy $\overline{D}=\bigcup_{T\in\mathcal{T}_{h}}T$,
and suppose all the triangulations used in this paper are shape
regular. Here $h$ stands for the maximal diameter of all triangles $T$.
With the triangulation $\{\mathcal{T}_{h}\}_{h>0}$, define the finite element
spaces 
\begin{eqnarray*}
\begin{aligned}
{V}_{h}&~=\{v_{h}\in H^{1}(D)~\big|~v_{h}|_{T}\in
\mathbb{P}_1 , ~\forall~ T\in \mathcal{T}_h\},\\
{V}^{0}_{h}&~=\{v_{h}\in H_{0}^{1}(D)~\big|~v_{h}|_{T}\in
\mathbb{P}_1 ,~ \forall~ T\in \mathcal{T}_h\},
\end{aligned}
\end{eqnarray*}
 and the approximation sets of $U$ and $F$ could be defined as:
\begin{eqnarray*}\label{FEMSpace}
\begin{aligned}
U_{h}&~=L^{2}(\Gamma;V^{0}_{h}),\quad F_{h}=V_{h},
\end{aligned}
\end{eqnarray*}
where $\mathbb{P}_{1}$ is the space of polynomials of degree less
than or equal to 1. 

Further, let ${\bf \Phi}={\bf \Phi}(x)=(\phi_1(x),
\cdots, \phi_N(x))$ denotes a vector valued function consisting of
all the basis functions of $V_{h}$, where $N$ stands for the number of basis function (or degree of freedom). 
For any fixed $\omega \in \Gamma$, the vector ${\bf
u_{\epsilon}}(\omega)=(u_1(\omega),\cdots,u_N(\omega))^{T} \in \mathbb{R}^{N}$, then the FEM approximations of the state
variable $u_{\epsilon}$ and the control variable $f$ could be written as
\begin{eqnarray*}\label{FEMSpace}
\begin{aligned}
u_{h}(x,\omega)=\sum\limits_{j=1}^{N}u_{j}(\omega)\phi_{j}(x)\in U_{h},\quad
f_{h}(x)=\sum\limits_{j=1}^{N}f_{j}\phi_{j}(x)\in F_{h},
\end{aligned}
\end{eqnarray*}
where $\{\phi_{j}\}_{j=1}^{N}$ stands for basis functions of $U_h$,
and $N$ represents the degree of freedom on $D$.
Replacing $u_h$ and $f_h$ by ${\bf \Phi}{\bf
u_{\epsilon}}(\omega)$ and ${\bf \Phi}{\bf f}$, so (\ref{eq:approx weak form}) can be reformulated by 
\begin{equation}\label{eq:primal matrixvector}
A(\omega_i){\bf u_{\epsilon}}(\omega_i) =B_{1}{\bf f}, \quad i=1,\cdots,M,
\end{equation}
here, 
\begin{eqnarray}
\begin{aligned}
&\quad A(\omega_i)= (\alpha(x,\omega_i)\nabla\phi_{m},\nabla\phi_{n})_{L^{2}(D)}+\frac{1}{\epsilon}\left(\left(1-H_{\epsilon}(g)\right)\phi_{m},\phi_{n}\right)_{L^{2}(D)}\in \mathbb{R}^{N\times N},\\
\quad &\quad B_{1}=(\phi_{m},\phi_{n})_{L^{2}(D)}\in \mathbb{R}^{N\times N}.
\end{aligned}
\end{eqnarray}
Similarly, if the cost functional satisfies (\ref{eq:E obj}), then the matrix-vector form of (\ref{eq:dual weak form}) is 
\begin{equation}\label{eq:dual matrixvector}
A(\omega_i){\bf z}(\omega_i) =B_{2}{\bf (u_{\epsilon}-u_{d})}, \quad i=1,\cdots,M,
\end{equation}
with $B_{2}=(\left(1-H_{\epsilon}(g)\right)\phi_{m},\phi_{n})_{L^{2}(D)}\in \mathbb{R}^{N\times N},$ or equivalently, the right-hand side of (\ref{eq:dual matrixvector}) is $B_{3}(u_{\epsilon}-u_{d})$ with $B_{3}=\left(\phi_{m},\phi_{n}\right)_{L^{2}(O)}$.

\subsection{MC approximation and gradient descent method}
In this subsection, we mainly apply the standard MC method to
approximate the expectation operator $\mathbb{E}$. 
${\{\omega_{i}\}_{i=1}^{M}}$ ($M \gg 1$) are the identically distributed
independent samples chosen from the probability space
$(\Gamma,\mathcal{F},P)$. Because of the law of large numbers and the
central limit theorem, for any state variable $u\in
L^{2}(\Gamma;H^{1}(D))$, we can approximate the expectation
$\mathbb{E}[u_h(x,\omega)]$  by the average of the $M$ samples
\begin{equation}\label{eq:appro expec oper}
   \mathbb{E}_{M}[u_h(x,\omega)]=\frac{1}{M}\sum_{i=1}^{M} u_h(x,\omega_{i}),
\end{equation}
After FEM and MC, we can derive the following discretized shape optimization objective functional
\begin{equation}
 \min\limits_{g}E_{M,h}\left[\int_{\Omega}J(x,\omega,u_{h}(x,\omega))dx\right]
\end{equation} 
In the following, an accelerated version of gradient descent method is used to solve the above optimal problem and update the parameter $g$. Now, we can describe the update process in the following algorithm.
\begin{algorithm}[H]\caption{FEM-MC-AGD}\label{al:FEM-MC-GD}
\renewcommand{\algorithmicrequire}{\textbf{Input:}}
\begin{algorithmic}[1]
\Require 
 Initial shape function  ${\bf{g}}^{0}$, positive step sizes $\alpha_{0}$, random sample set $\{\omega_{i}\}_{i\geq 1}$, force term $f$, desired state $u_{d}$ and regularization parameter $\epsilon$, $k=0$;
 \Ensure the optimal shape related to ${\bf{g}}^{k+1}$;
 \State Generate constant  mass matrix $B_{1}$;
\While {not stopping condition}
\ForAll{$i=1,2,\cdots, M$}
\State Generate the stochastic approximate stiffness matrix $A(\omega_{i})$, compute the solution vector $ {\bf{u}}^{k}_{\epsilon}(\omega_{i})$ of  primal equation by (\ref{eq:primal matrixvector});
\State Compute the cost value $j({\bf{g}^{k+1}})$;
\State Generate the approximate mass matrix $B_{2}$, compute the solution vector $ {\bf{z}}^{k}(\omega_{i})$ of  dual equation (if (\ref{eq:E obj}) holds, then by (\ref{eq:dual matrixvector}));
\State Compute the stochastic gradient vector $\nabla  j({\bf g}^k)$ by using $ {\bf{u}}^{k}_{\epsilon}(\omega_{i})$, ${\bf{z}}^{k}(\omega_{i})$;

\EndFor
\State Compute the average gradient $\nabla \bar  j({\bf g}^k)$  and the average  cost $\bar{j}({\bf g}^{k+1})$; 
\State Update ${\bf g}^{k+1}={\bf g}^{k}-\alpha_{k}\nabla \bar  j({\bf g}^k)$, where $\alpha_{k}$ satisfy Armijo condition;
\State  $k=k+1$;
\EndWhile
 
 \end{algorithmic}
\end{algorithm}
\begin{remark}
In the above algorithm, the stopping condition  include setting the maximum iteration steps, $k=1,2,\cdots K_{max}$, using the relative (absolute) error between ${\bf g}^k,{\bf g}^{k+1}$ or $j({\bf g}^k),j({\bf g}^{k+1})$. For example, we stop the algorithm if $|j({\bf g}^{k+1})-j({\bf g}^{k})|<1e-8$ or $\|{\bf g}^{k+1}-{\bf g}^{k}\|_{2}<1e-8$ holds, where $\frac{\|{\bf g}^{k+1}-{\bf g}^{k}\|_{2}}{\|{\bf g}^{k+1}\|_{2}}$ is used instead of $\|{\bf g}^{k+1}-{\bf g}^{k}\|_{2}$ if $\|{\bf g}^{k+1}\|_{2}\geq 1$, and analogous for $j({\bf g}^{k})$ using absolute value to measure.
\end{remark}
\begin{remark}
When considering  objective functional (\ref{eq:O obj}) and $O\subset K\subset D$, we need to project the solution into the domain $K$  because constraint $g\geq 0$ has been imposed on this set.
\end{remark}
\begin{remark}
An enhanced search scheme---three-point quadratic interpolation  coupled with quadratic fitting method is applied to search for the optimal step size in gradient descent method. More specifically,  it evaluates cost at three strategically chosen points relative to the historical step size:
$[0.5\alpha_{prev}, \alpha_{prev}, 1.5\alpha_{prev}] $,  then fits a quadratic polynomial to the cost values and computes the exact minimum. We also set the minimum, maximum step size $\alpha_{min}=1,\alpha_{max}=10$,  separately.  In the iteration process, momentum gradient smoothing is also used to accelerate convergence, which combines current gradient with historical information.
\end{remark}
\section{Numerical experiments}\label{sec4}
The purpose of this  section is to illustrate the validity and  main features from the previous sections 
with the propose four numerical examples.

We consider the optimization shape problem on the spatial domain
$D=(-1,1)^2\subset \mathbb{R}^{2}$, the diffusion coefficient $\alpha(x,\omega)$
is set to be
\begin{equation}\label{eq:coeffa}
\alpha(x,\omega)=1+\rho\eta(x,\omega),
\end{equation}
other cases could be considered similarly. Here,  $\rho=0.01$ stands for the disturbing degree, $\eta(x,\omega)\in
L^{2}(\Gamma;L^{\infty}(D))$ denotes the random process satisfying
\begin{equation*}\label{eq:eta}
\mathcal{P}{\{\omega \in \Gamma;\|\eta(x,\omega)\|_{L^{\infty}(D)}\leq
1\}=1},
\end{equation*}
and  obeys the
uniform distribution on the interval of $[-1,1]$ for any fixed $x\in
D$. 
Fixed regularization parameter $\epsilon=1e-5$, the maximum iteration steps $K_{max}=1000$ and sample number $M=100$. Considering the samples are independent and identify distribution and the structure of the algorithms, we adopt a parallel strategy to  accelerate the running speed. In addition, we observe $H_{\epsilon}(g)$ is a monotone increasing function, so the simplified gradient which drop the term $H'_{\epsilon}(g)$ can be  applied to the algorithm, this will  reduce the computation. The finite element mesh size $h=\frac{1}{128}$, including $32258$ triangular elements and 16384 vertices.
\begin{example}\label{ex1}
In this example, we consider the objective functional 
\begin{equation*}
j(x,\omega,u(x,\omega))=\frac{1}{2}\mathbb{E}\left[\int_{O}(u-u_{d})^2dx\right]
\end{equation*}
with $u_{d}(x)=-\left(x_{1}-\frac{1}{2}\right)^2-\left(x_{2}-\frac{1}{2}\right)^2+\frac{1}{8}$ and the fixed square domain $O=[-\frac{1}{2},\frac{1}{2}]^2\subset D$. The initial shape function is 
\begin{equation*} g^{0}(x)=\frac{7}{8}-x_{1}^2-x_{2}^{2},
\end{equation*} 
the right-hand side function $f=2$.
Besides, we impose the extra constraint $g\geq 0$ in $O$. In the algorithm,  the simplified descent direction is the negative gradient direction
$d=-\frac{1}{\epsilon}\mathbb{E}\left[u_{\epsilon}z\right]$.  
\end{example} 
Figure~ \ref{fig:ex1initialshape} shows the initial geometry with the boundary, where the solid blue line represents the boundary of $g^0(x)$, the black dotted line represents area $D$ and the dotted line in magenta indicates the boundary of area $O$, while the optimal geometry is none other than the square domain $O$. 
 \begin{figure}[H]
 \begin{center}
 \includegraphics[width=0.9\textwidth]{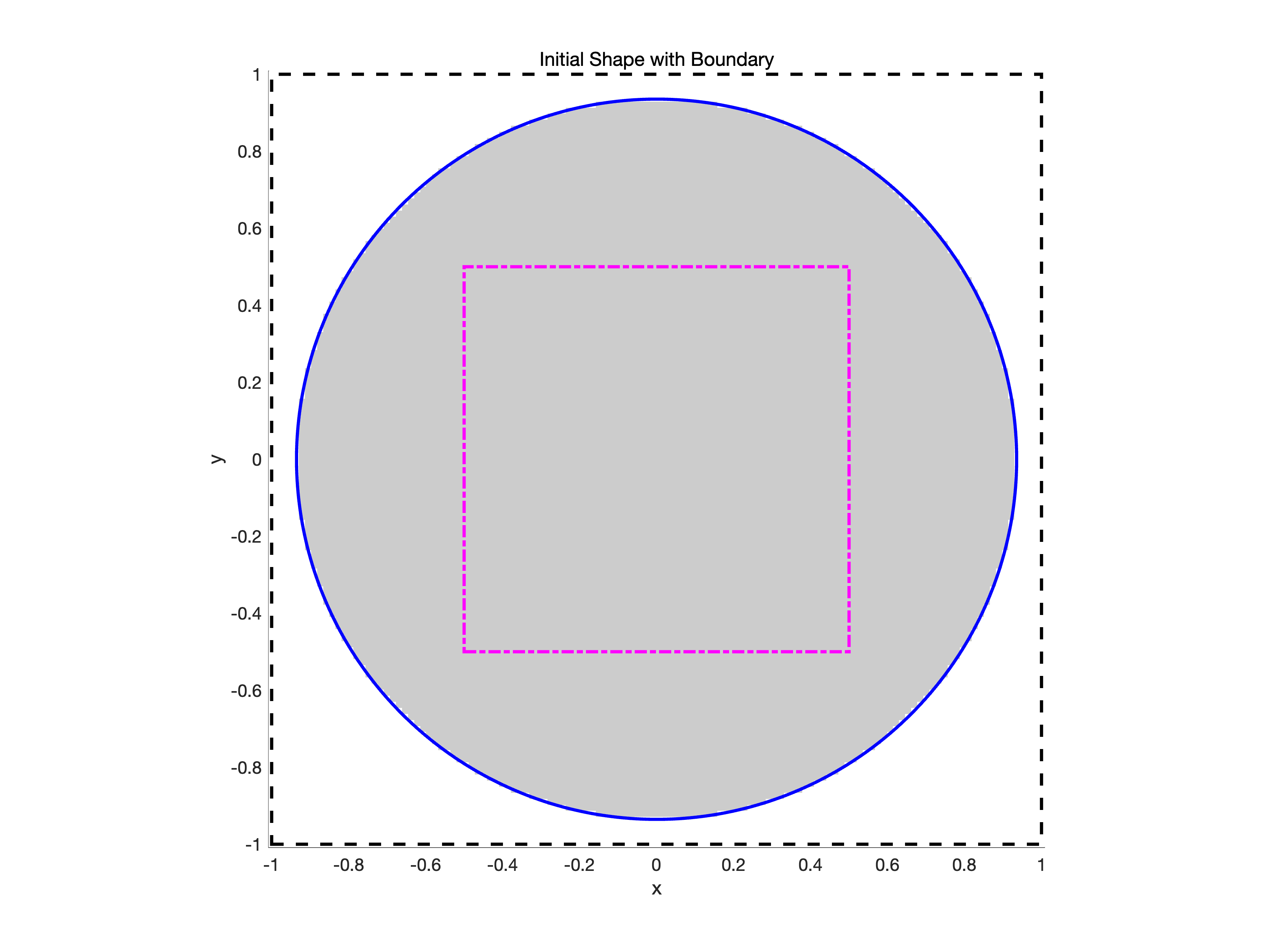}
 \caption{The initial shape with boundary.}
 \label{fig:ex1initialshape}
 \end{center}
 \end{figure}
 Figure~\ref{fig:ex1mediumshape} indicates  some intermediate shape during iteration, the corresponding expected cost values are $0.475400$, $0.269463$, $0.265508$, $0.265508$, $0.265507$, $0.265507$. In the last step, the absolute change of the costs $|j({\bf g}^{k+1})-j({\bf g}^{k})|$ is  $2.1103e-10$, and the corresponding expected cost is $0.265507$. We also present the iterative variation curve of the expected cost in Figure~\ref{fig:ex1expectedcost}.
 \begin{figure}[H]
 \begin{center}
 \begin{minipage}{0.325\linewidth}
 \includegraphics[width=0.9\textwidth]{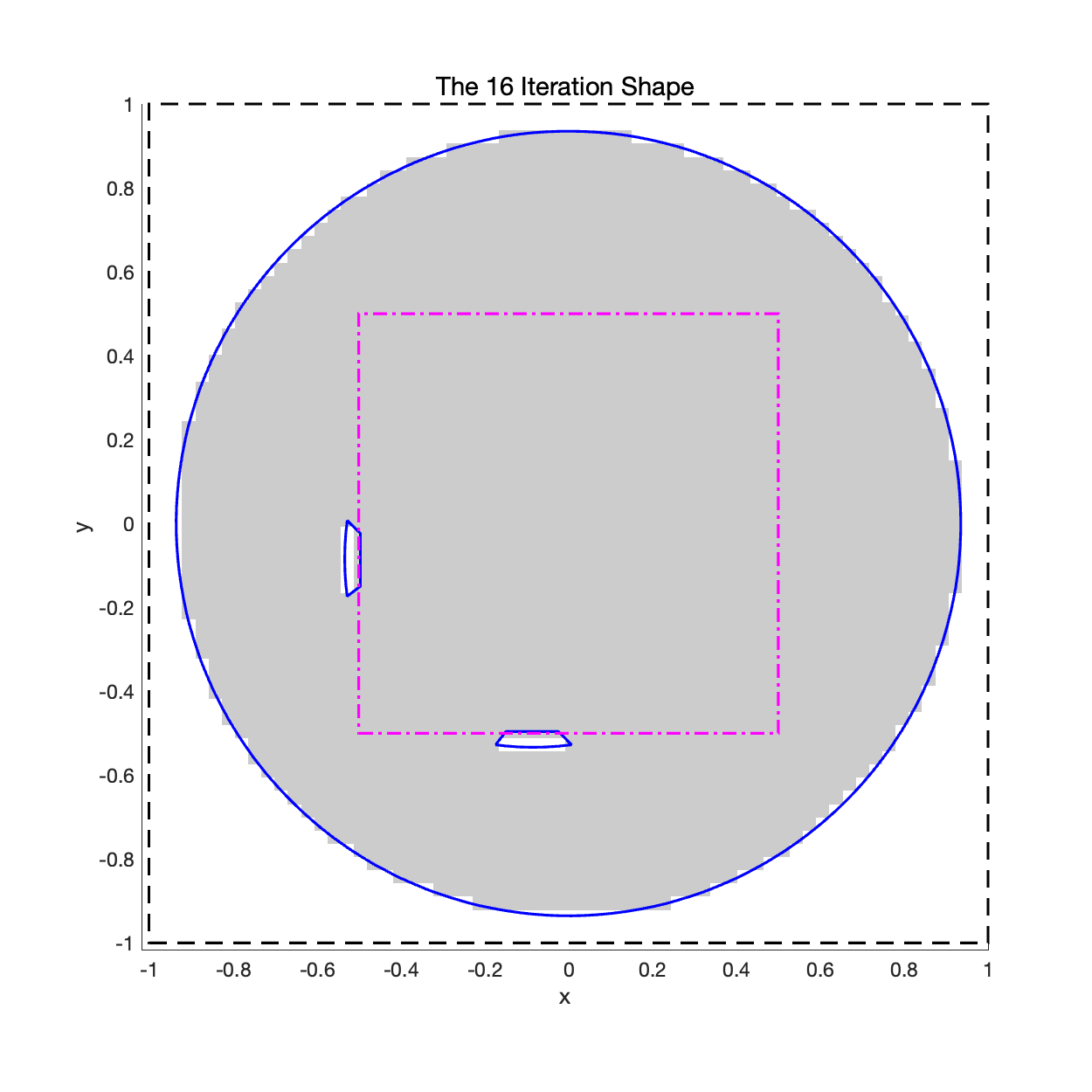}
 \end{minipage}
 \begin{minipage}{0.325\linewidth}
 \includegraphics[width=0.9\textwidth]{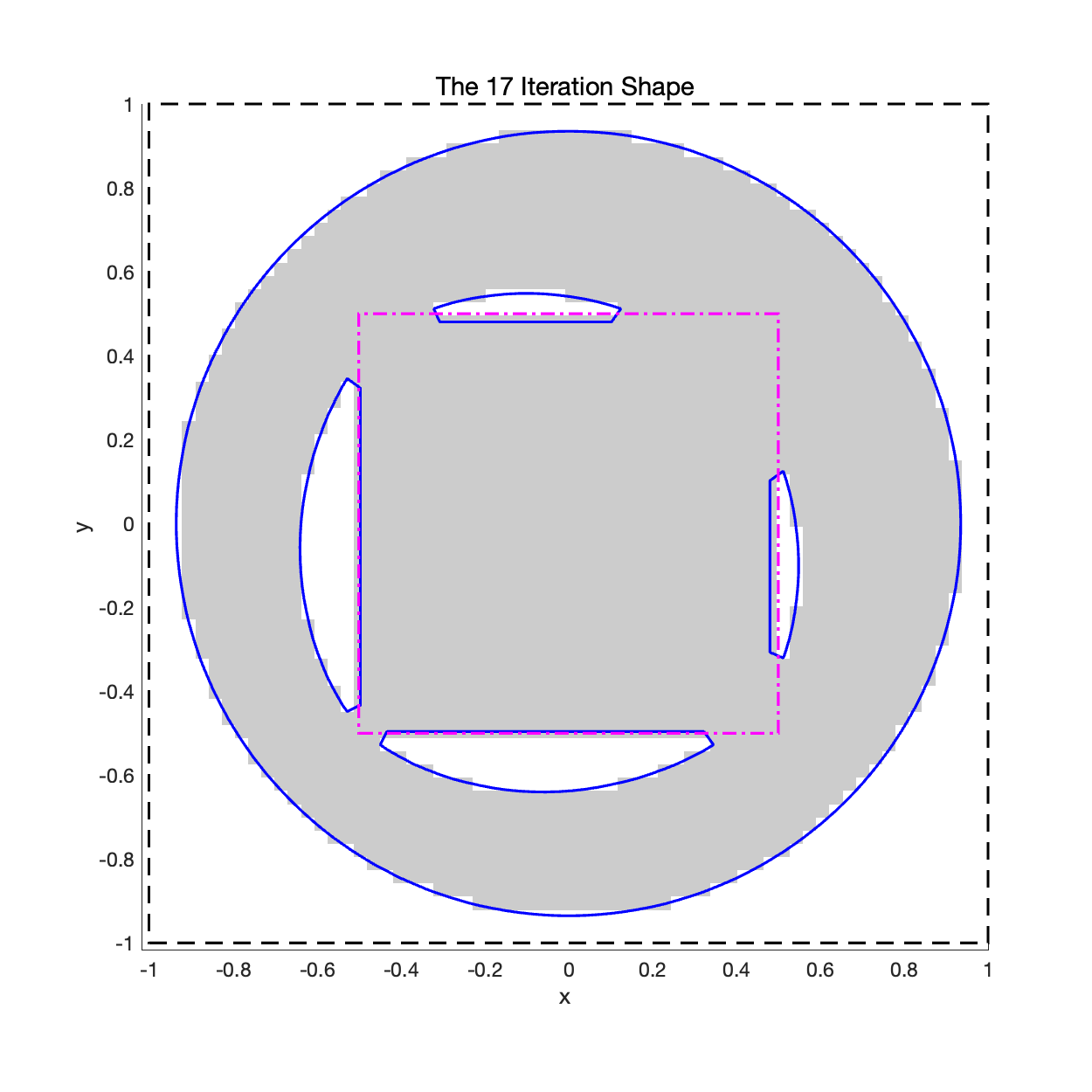}
 \end{minipage}
 \begin{minipage}{0.325\linewidth}
 \includegraphics[width=0.9\textwidth]{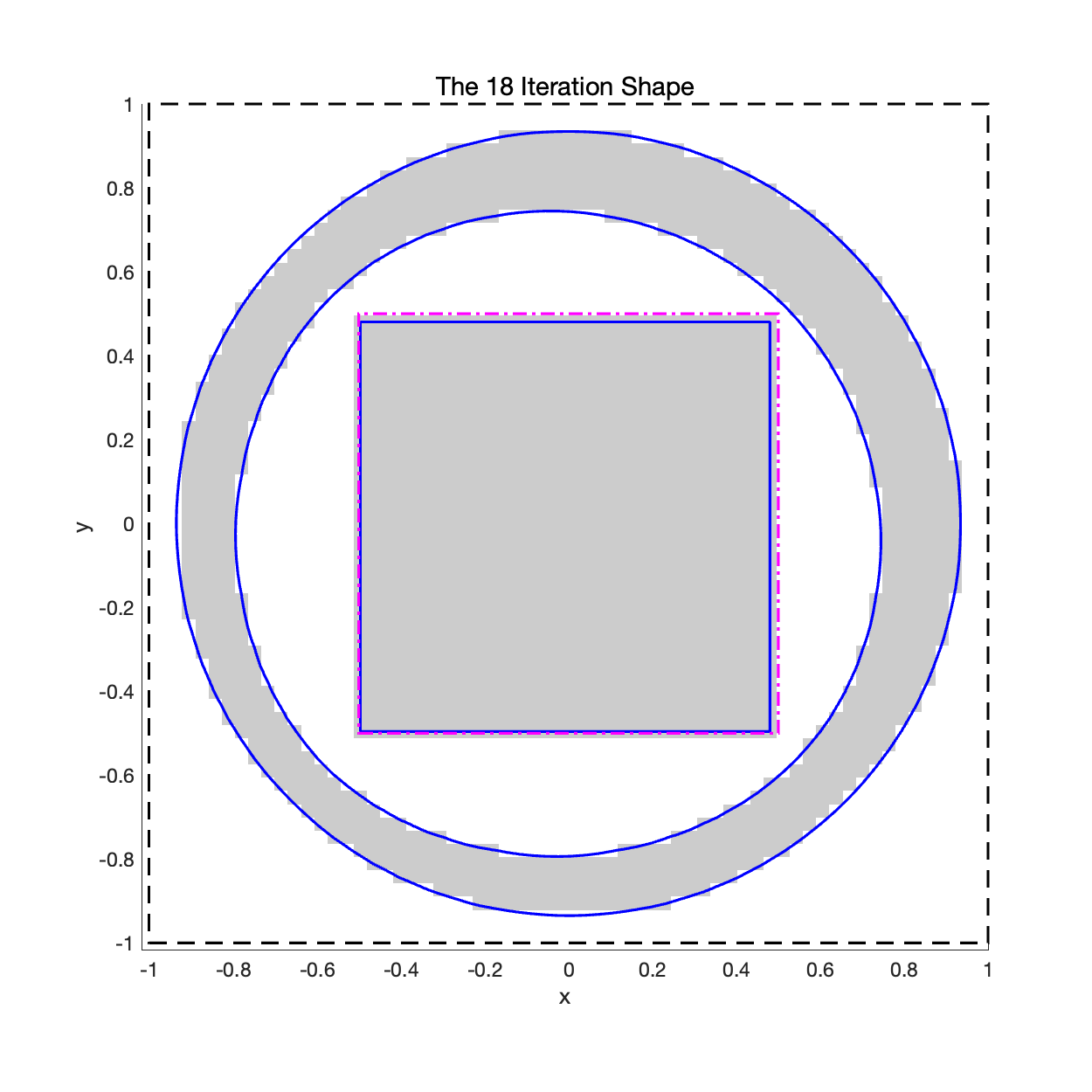}
 \end{minipage}
 \qquad
 \begin{minipage}{0.325\linewidth}
 \includegraphics[width=0.9\textwidth]{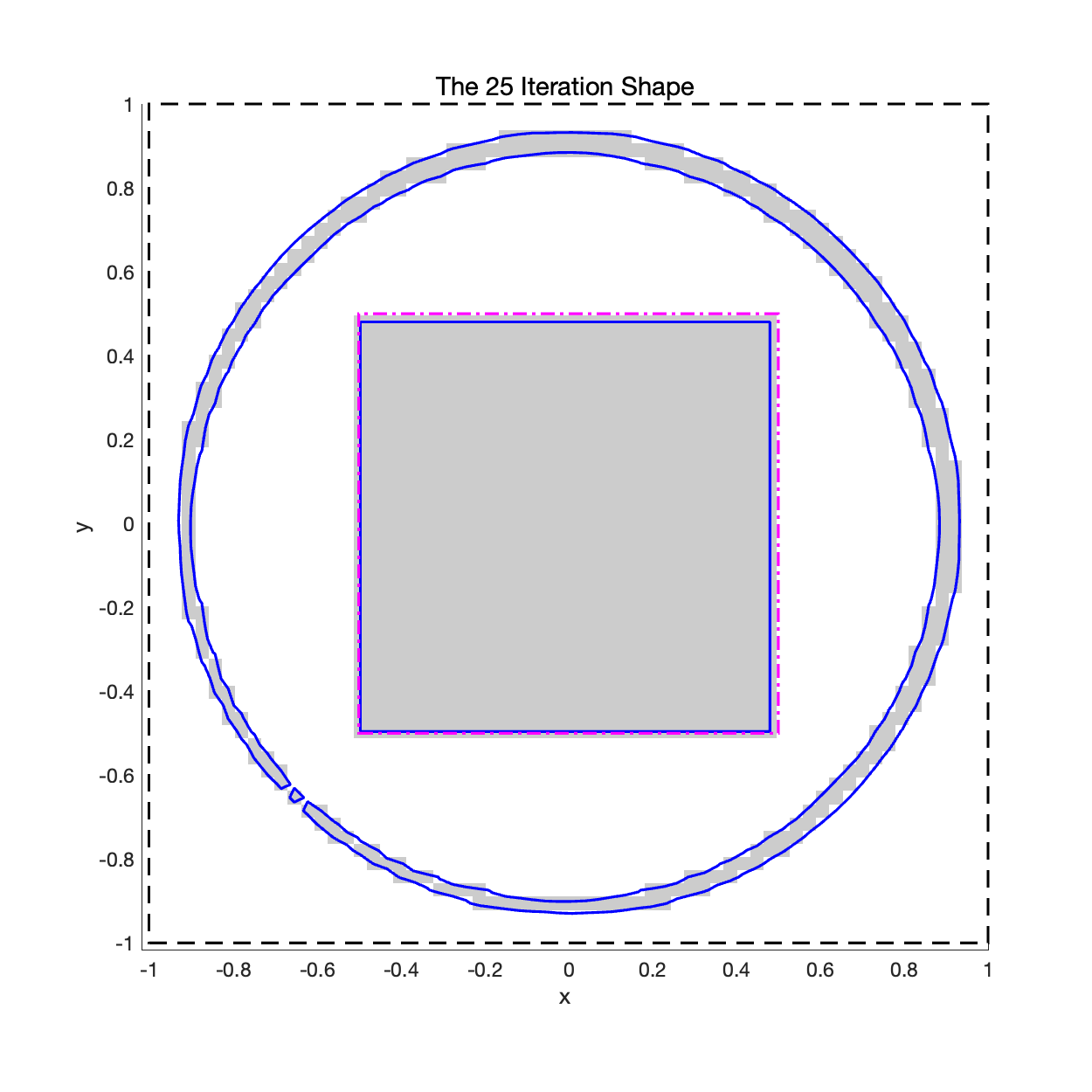}
 \end{minipage}
 \begin{minipage}{0.325\linewidth}
 \includegraphics[width=0.9\textwidth]{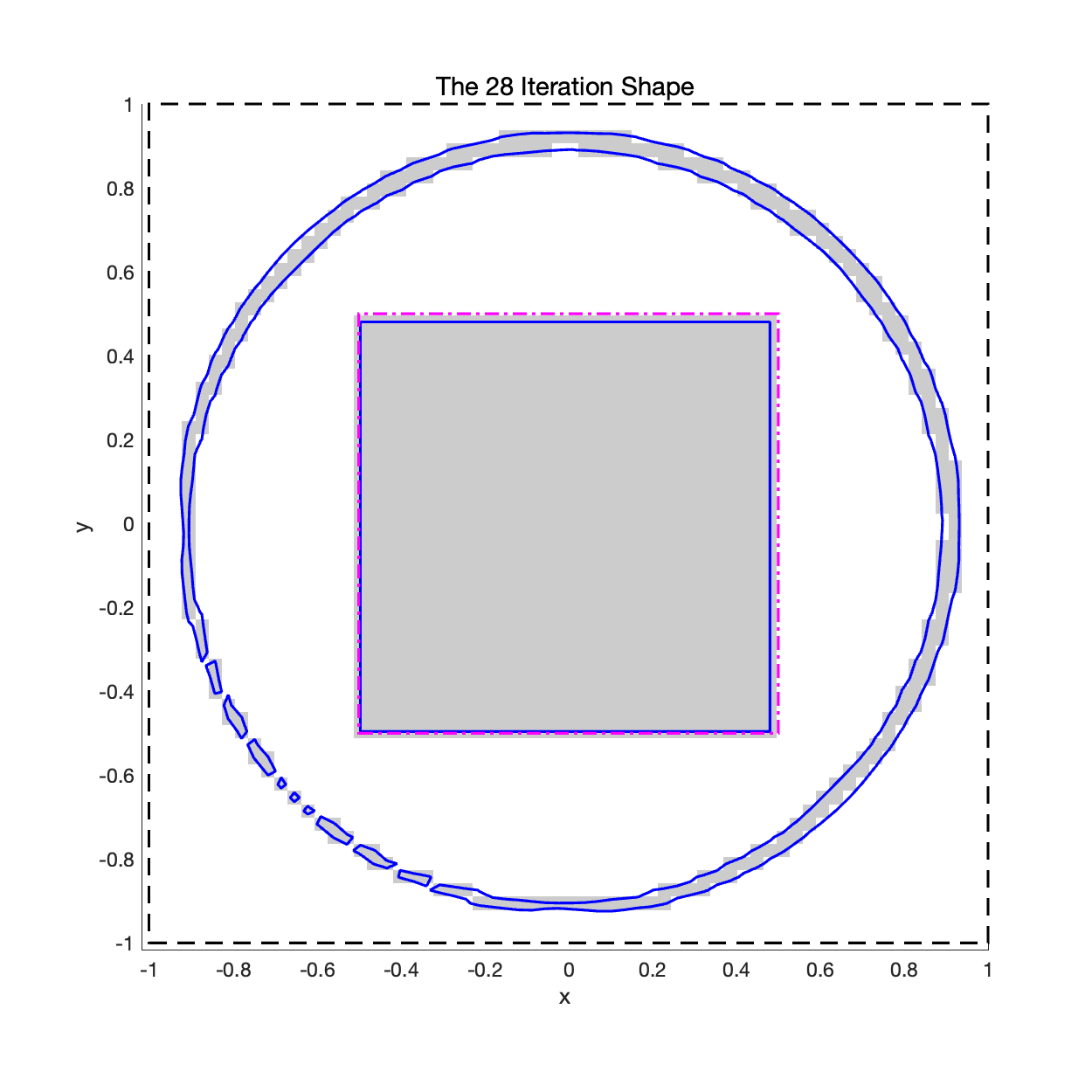}
 \end{minipage}
 \begin{minipage}{0.325\linewidth}
 \includegraphics[width=0.9\textwidth]{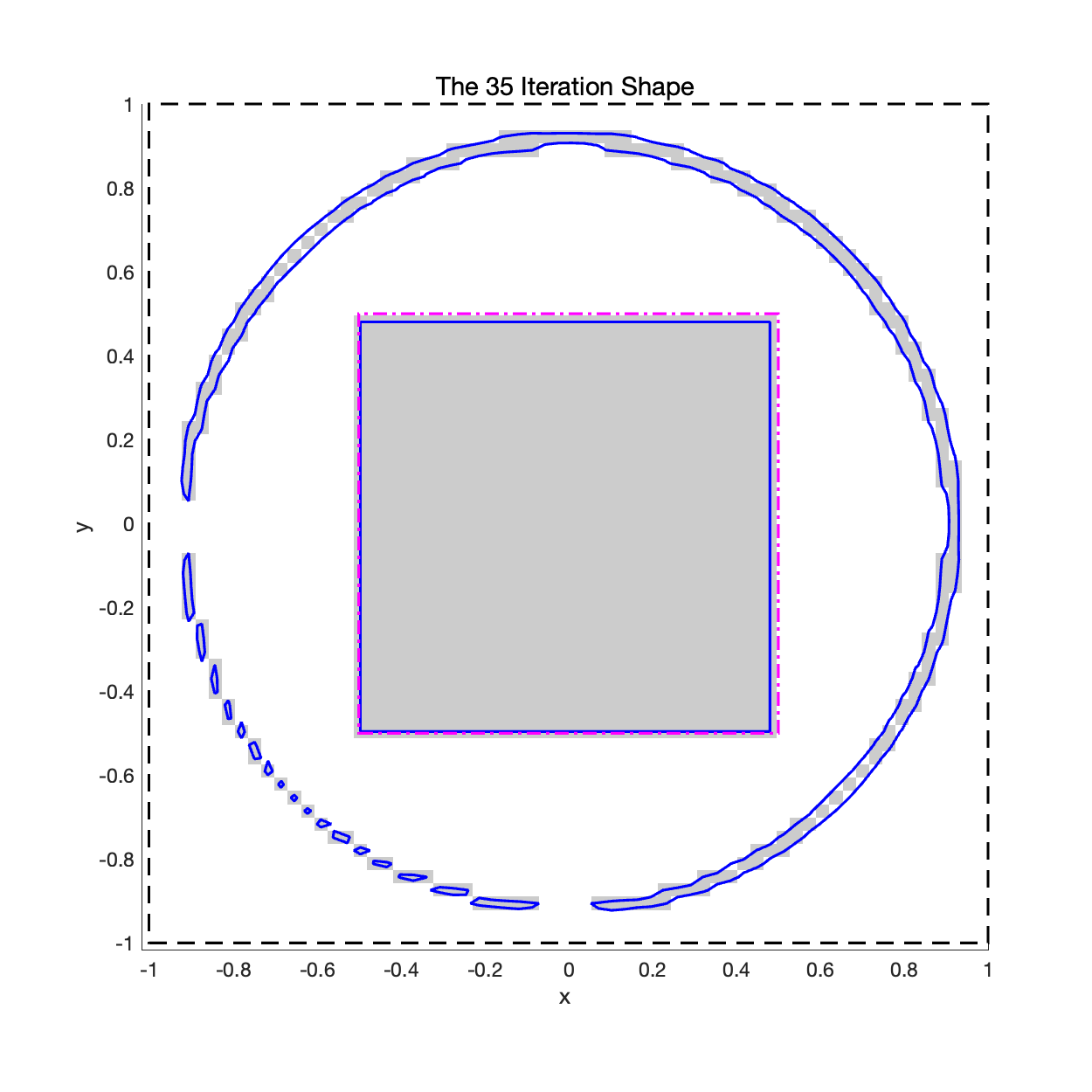}
 \end{minipage}
 \caption{The iterative shapes with boundary of steps 16, 17, 18, 25, 28 and 35.}
 \label{fig:ex1mediumshape}
 \end{center}
 \end{figure}

 \begin{figure}[H]
 \begin{center}
 \includegraphics[width=0.9\textwidth]{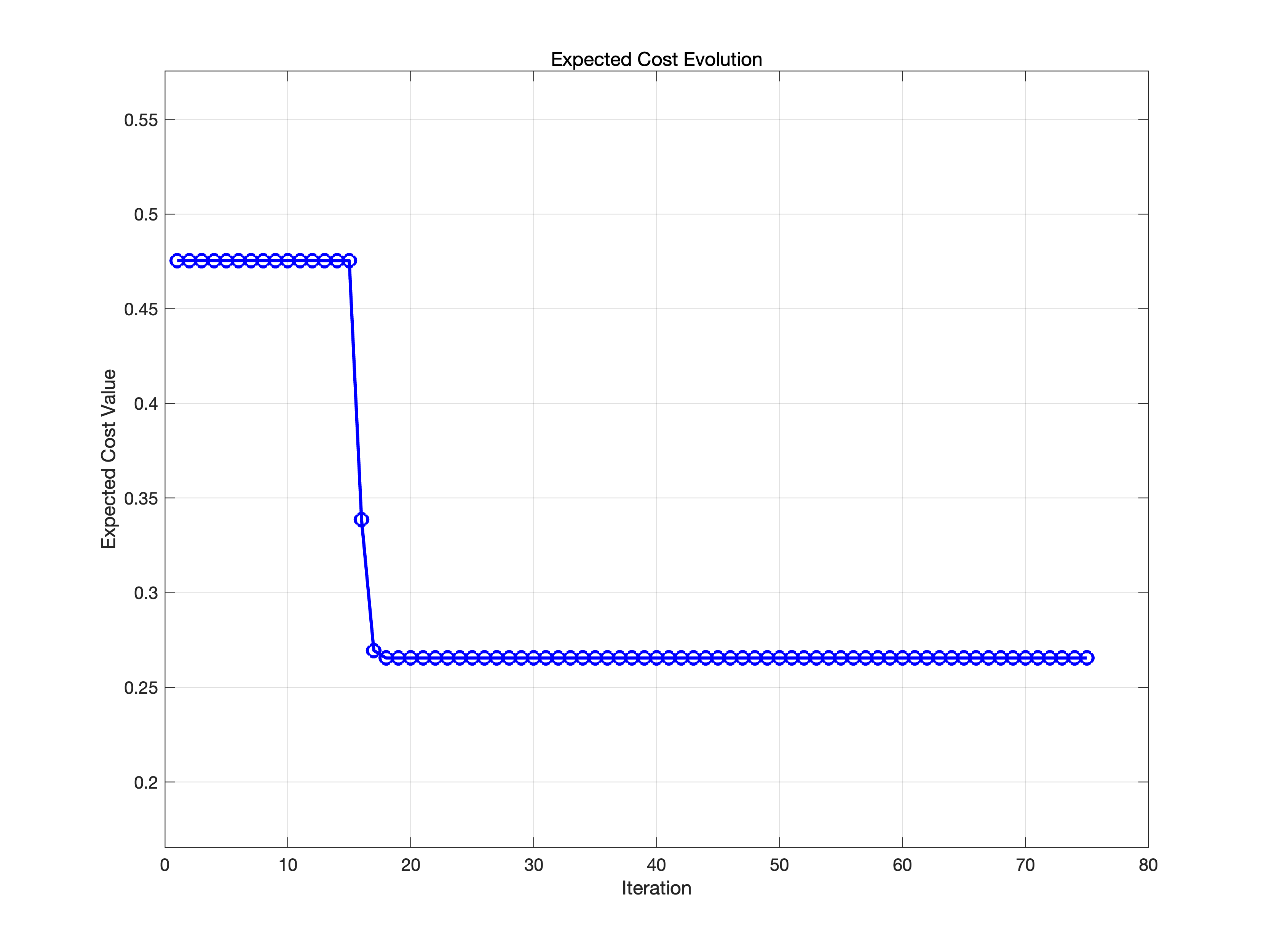}
 \caption{The variations of expected cost cure with iterations of Example~\ref{ex1}.}
 \label{fig:ex1expectedcost}
 \end{center}
 \end{figure}
 
 \begin{example}\label{ex2}
 Based on Example~\ref{ex1},  and now  let  the force term is a function depend on the spatial coordinate, that is $f(x)=2\pi^{2}sin(\pi x_1)sin(\pi x_2)+1$, and $u_d=sin(\pi x_1)sin(\pi x_2)$. 
\end{example}
We show some intermediate geometries  for shape optimizations using the same initial shape as Example~\ref{ex1}.  We observe that the intermediate iteration shape and the final shape are different from the previous example.  Hence, the square area $O$ is the theoretical optimal shape. We also notice that the shape update starts from the upper right corner and the lower left corner, it has  central symmetry, these symmetries are visible in  Figure~\ref{fig:mediumshape2}.  Figure~\ref{fig:ex2expectedcost} shows the expected cost evolution, in this example, the cost  value at the final step of the iteration  is $0.163355$, the  absolute change of cost value $|j({\bf{g}}^{k+1})-j({\bf{g}}^k)|$ is $9.6571e-10$ and the relative change  $\frac{\|{\bf{g}}^{k+1}-{\bf{g}}^{k}\|_{2}}{\|{\bf{g}}^{k+1}\|_{2}}$ is $1.3943e-04$.
\begin{figure}[H]
\begin{center}
\begin{minipage}{0.325\linewidth}
\includegraphics[width=0.9\textwidth]{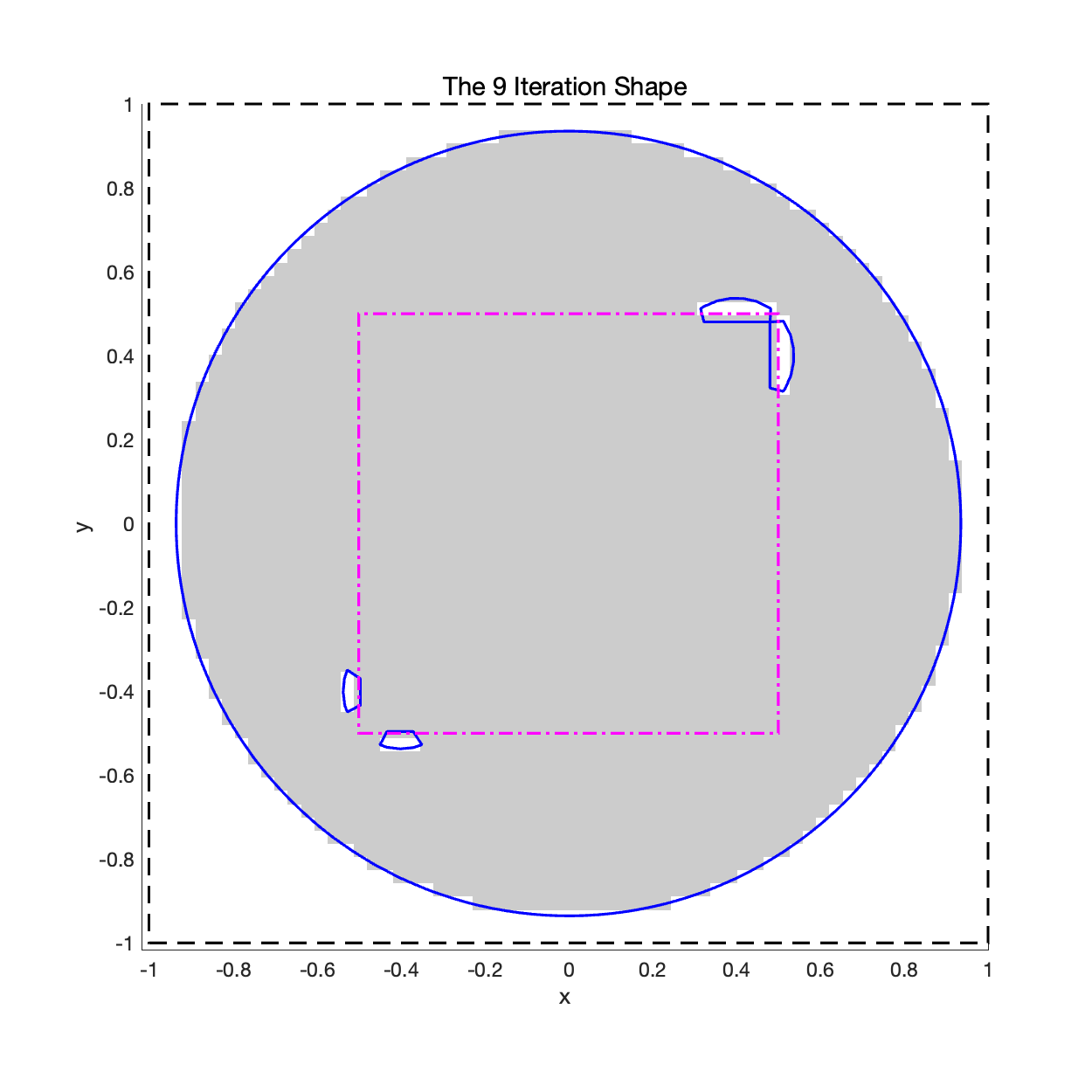}
\end{minipage}
\begin{minipage}{0.325\linewidth}
\includegraphics[width=0.9\textwidth]{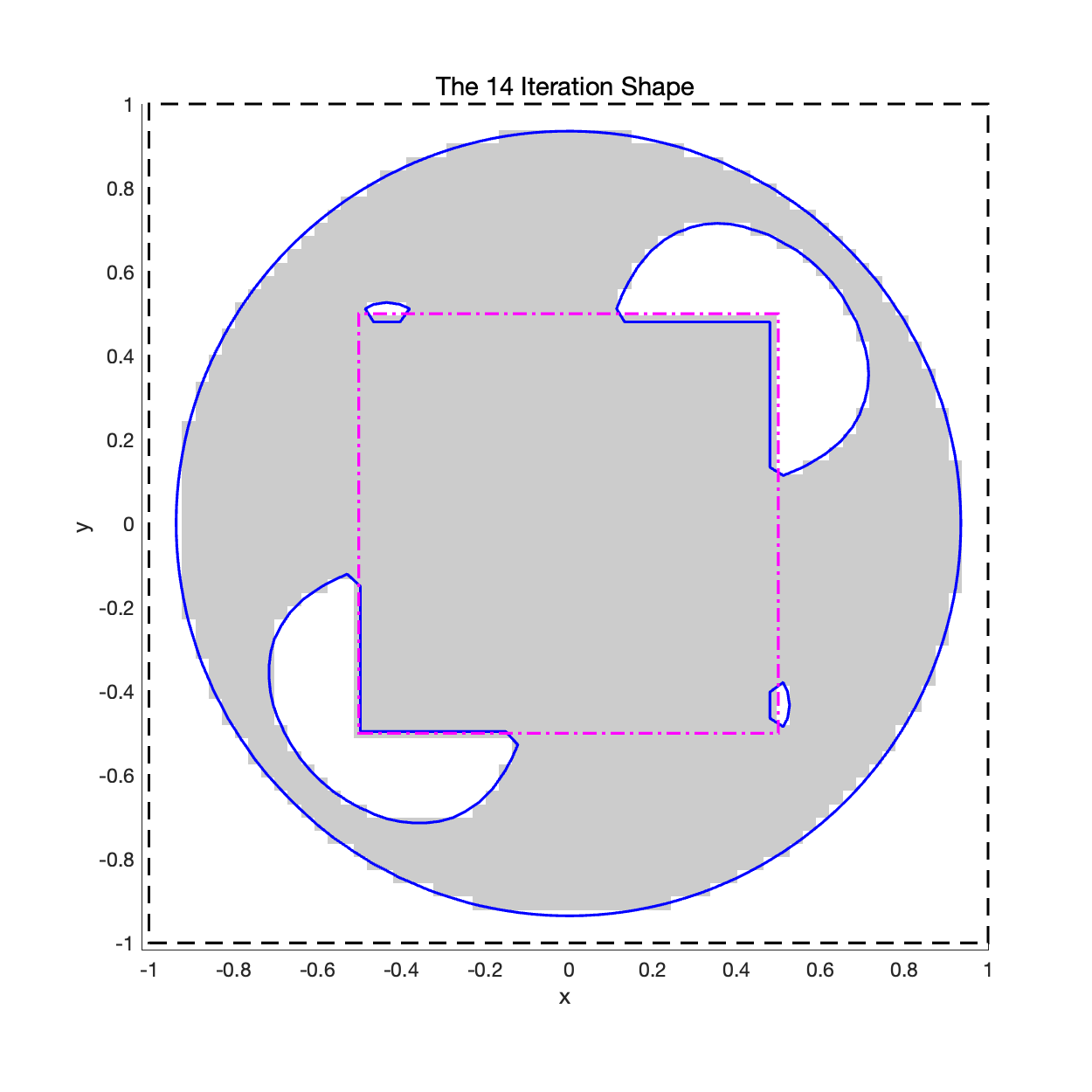}
\end{minipage}
\begin{minipage}{0.325\linewidth}
\includegraphics[width=0.9\textwidth]{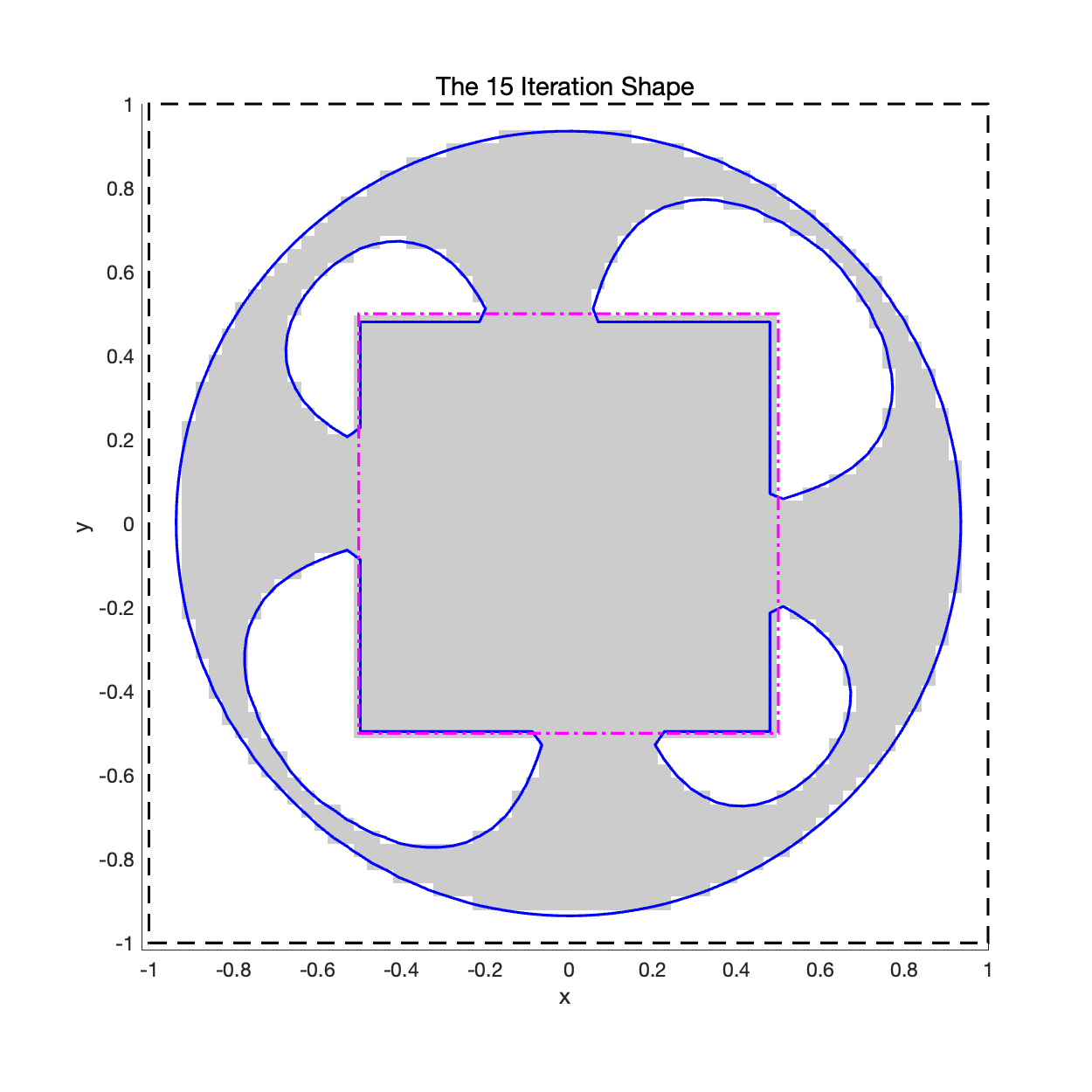}
\end{minipage}
\qquad
\begin{minipage}{0.325\linewidth}
\includegraphics[width=0.9\textwidth]{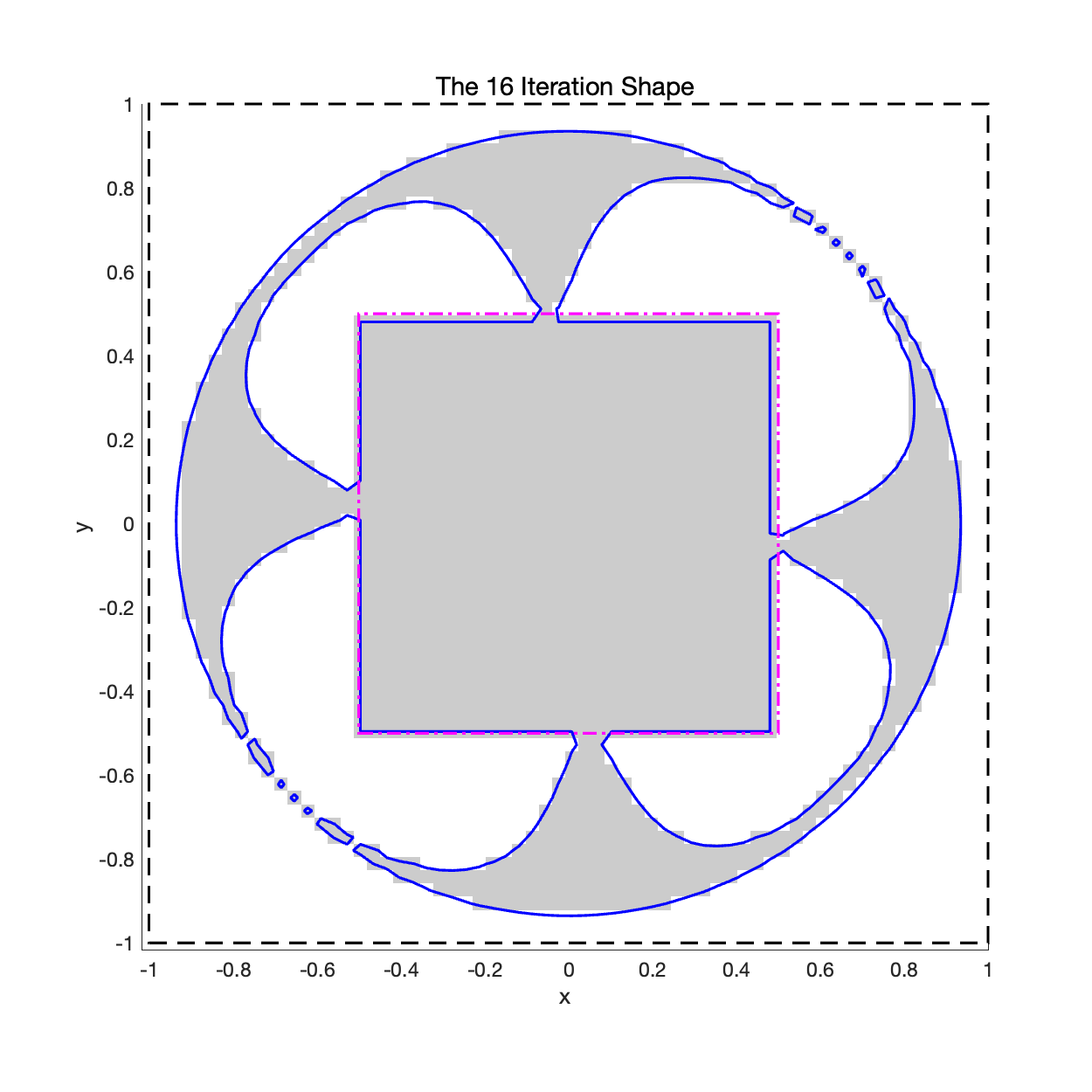}
\end{minipage}
\begin{minipage}{0.325\linewidth}
\includegraphics[width=0.9\textwidth]{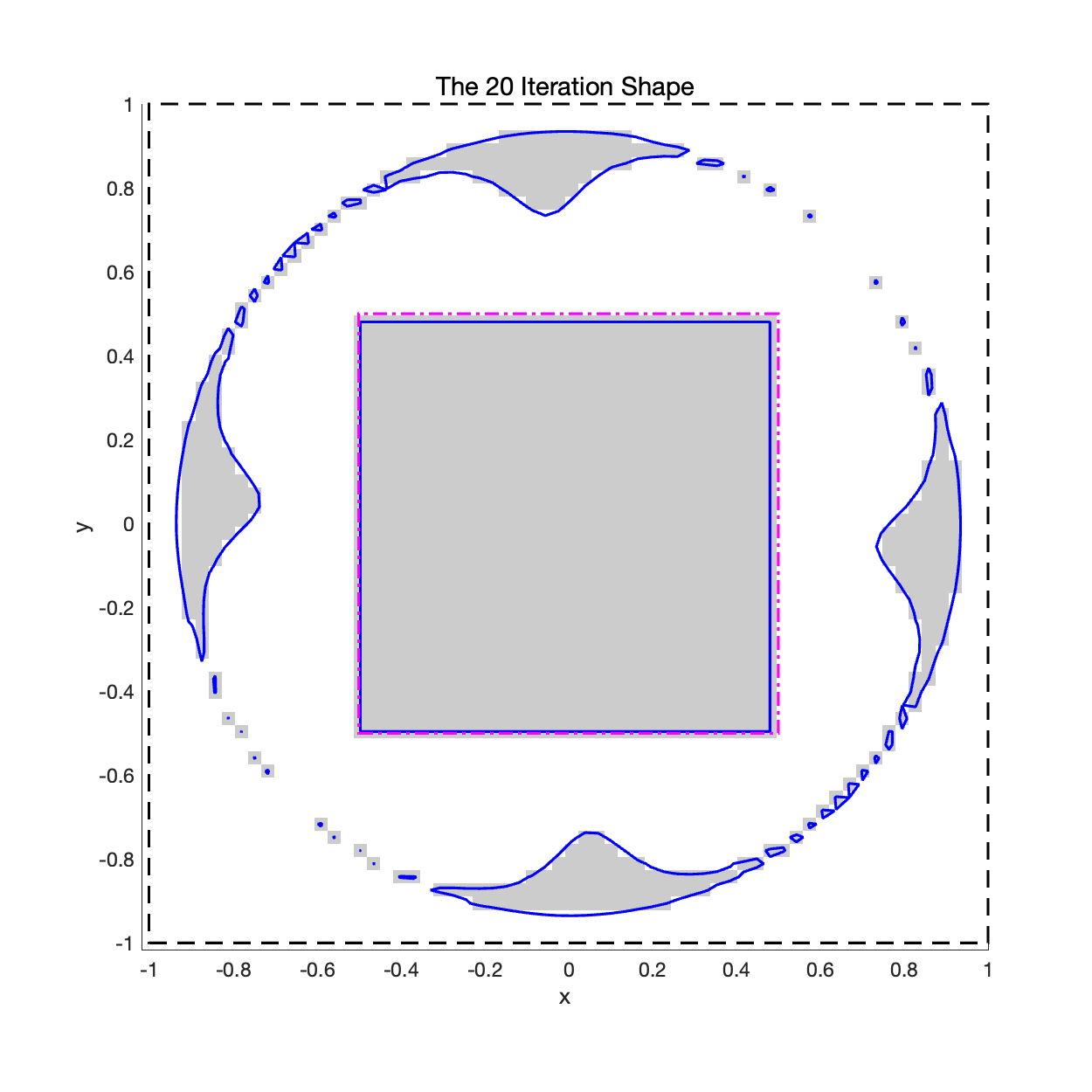}
\end{minipage}
\begin{minipage}{0.325\linewidth}
\includegraphics[width=0.9\textwidth]{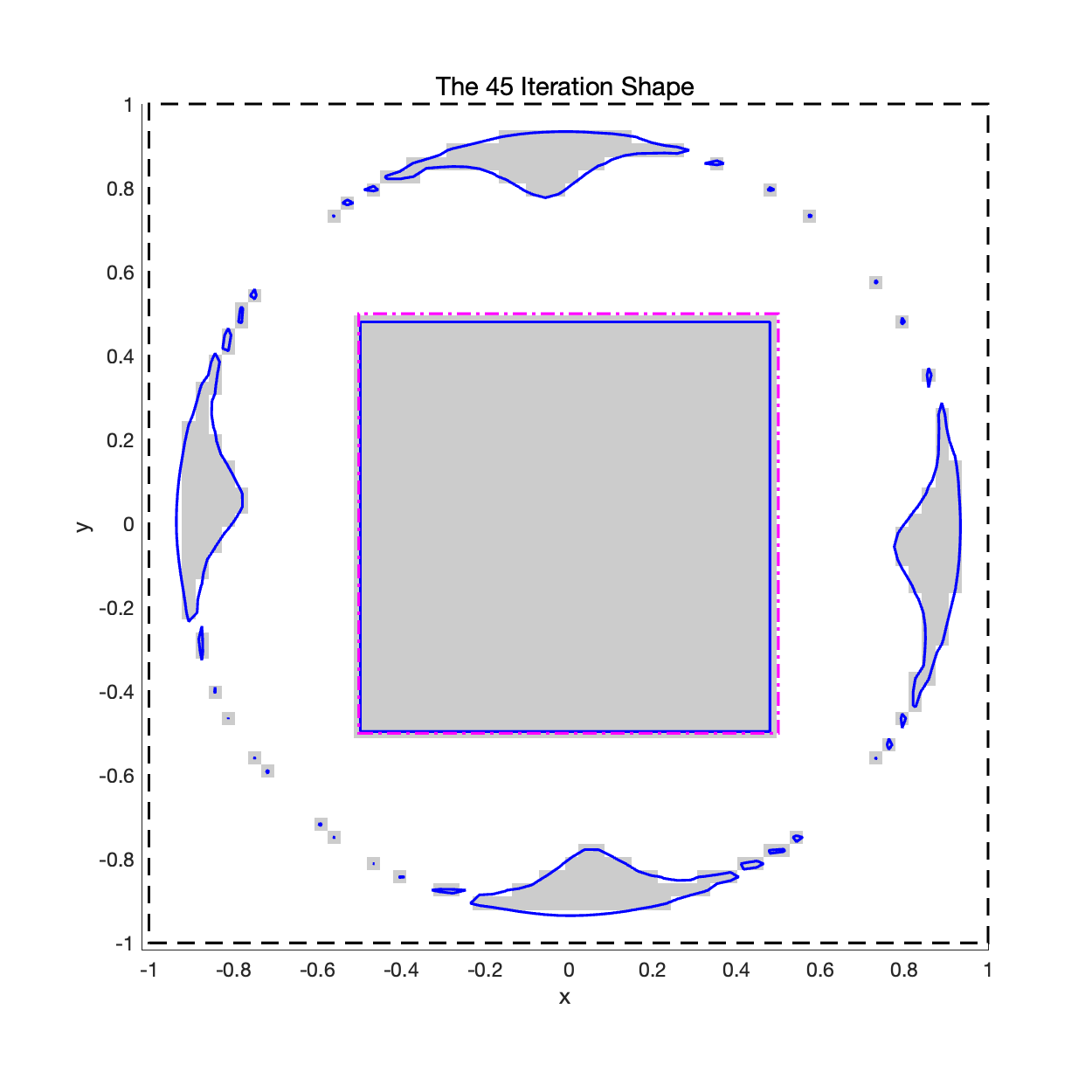}
\end{minipage}
\caption{The iterative shapes with boundary of steps 9, 14, 15, 16,  20, and 45.}
\label{fig:mediumshape2}
\end{center}
\end{figure}

\begin{figure}[H]
 \begin{center}
 \includegraphics[width=0.9\textwidth]{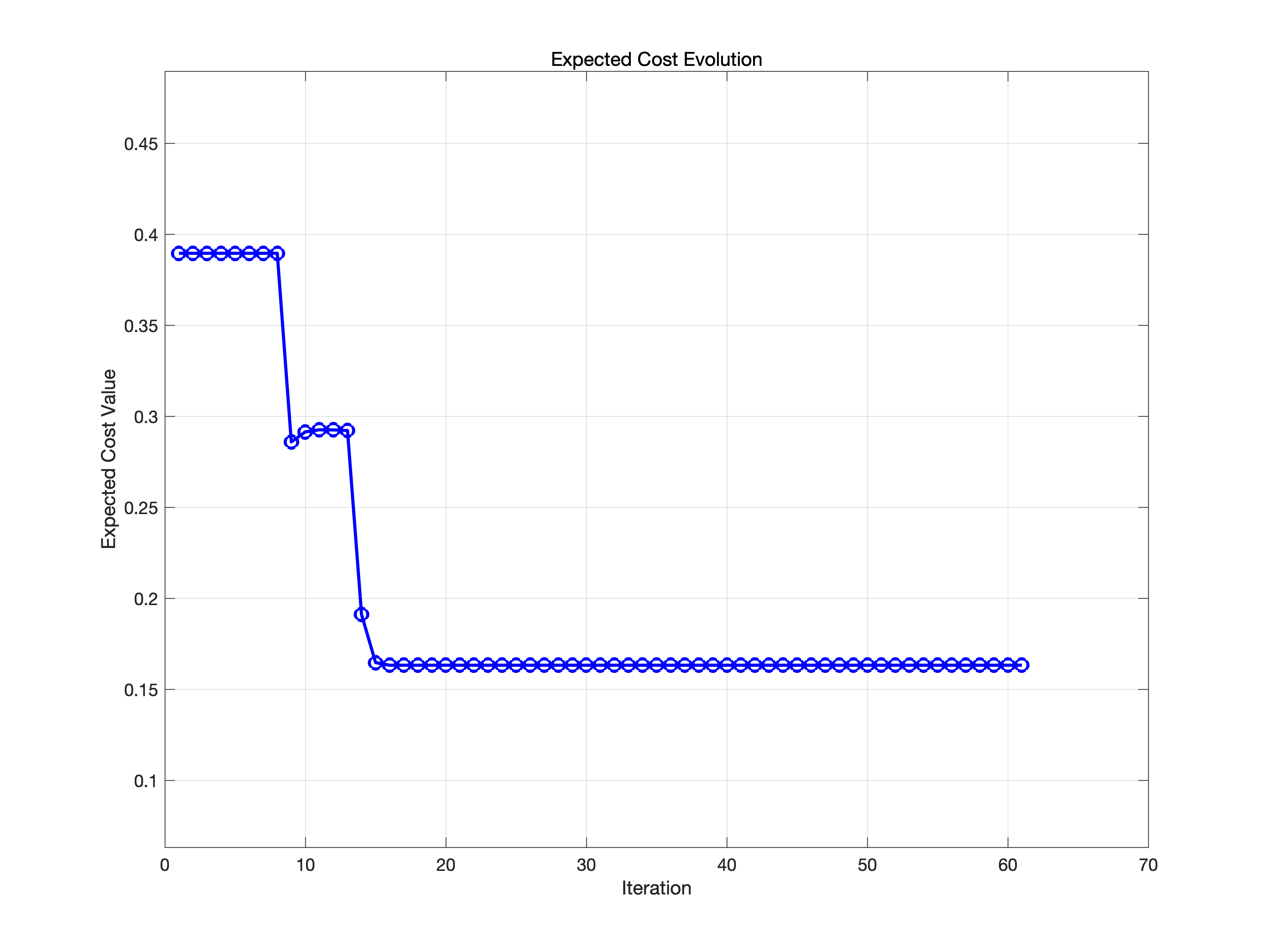}
 \caption{The variations of expected cost cure with iterations of Example \ref{ex1}.}
 \label{fig:ex2expectedcost}
 \end{center}
 \end{figure}
 
\begin{example}\label{ex3}
In this example, we only modify the subdomain $O$ to a disk area with the center at $(0,0)$ and a radius of $\frac{1}{2}$, other parameter settings are  same with Example~\ref{ex2}.
\end{example}
 The graphical representation about expected cost evolution is shown in Figure~\ref{fig:mediumshape3}. Figure~\ref{fig:ex3expectedcost} shows the expected cost evolution, In this example, the variation accuracy order of cost absolute value $|j({\bf{g}}^{k+1})-j({\bf{g}}^k)|$  can also achieve $1e-10$,  and the final expected cost is $0.065557$. 

\begin{figure}[H]
\begin{center}
\begin{minipage}{0.325\linewidth}
\includegraphics[width=0.9\textwidth]{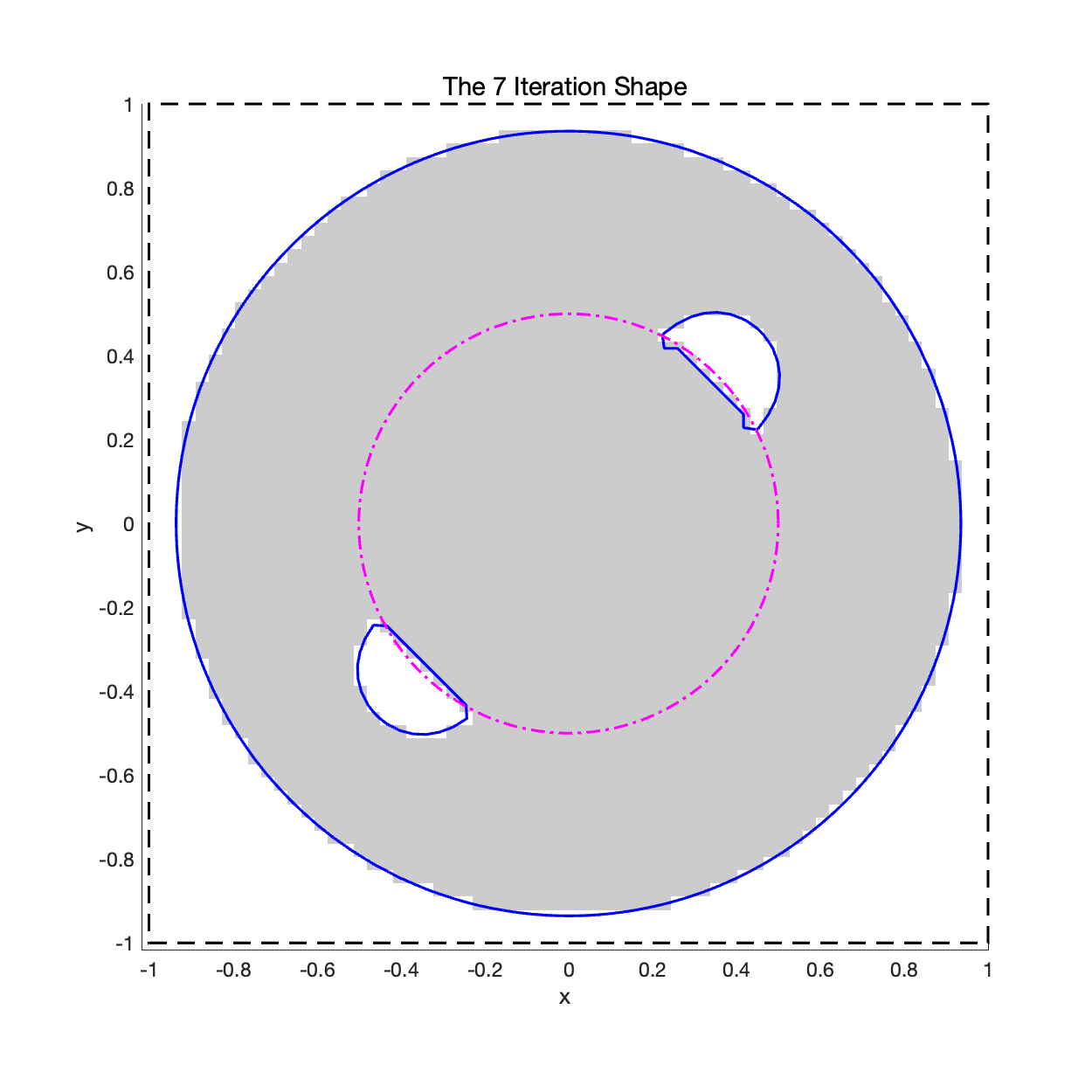}
\end{minipage}
\begin{minipage}{0.325\linewidth}
\includegraphics[width=0.9\textwidth]{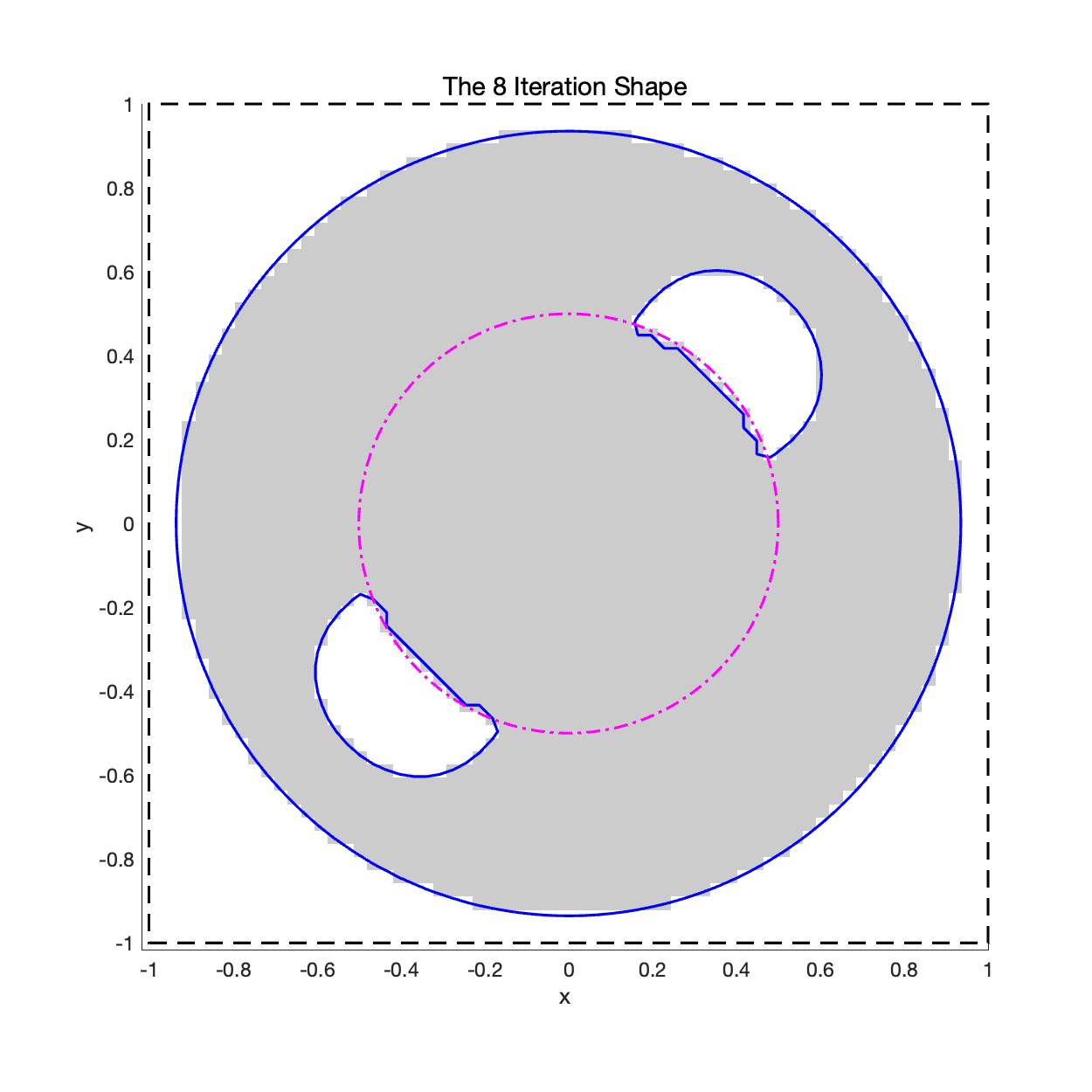}
\end{minipage}
\begin{minipage}{0.325\linewidth}
\includegraphics[width=0.9\textwidth]{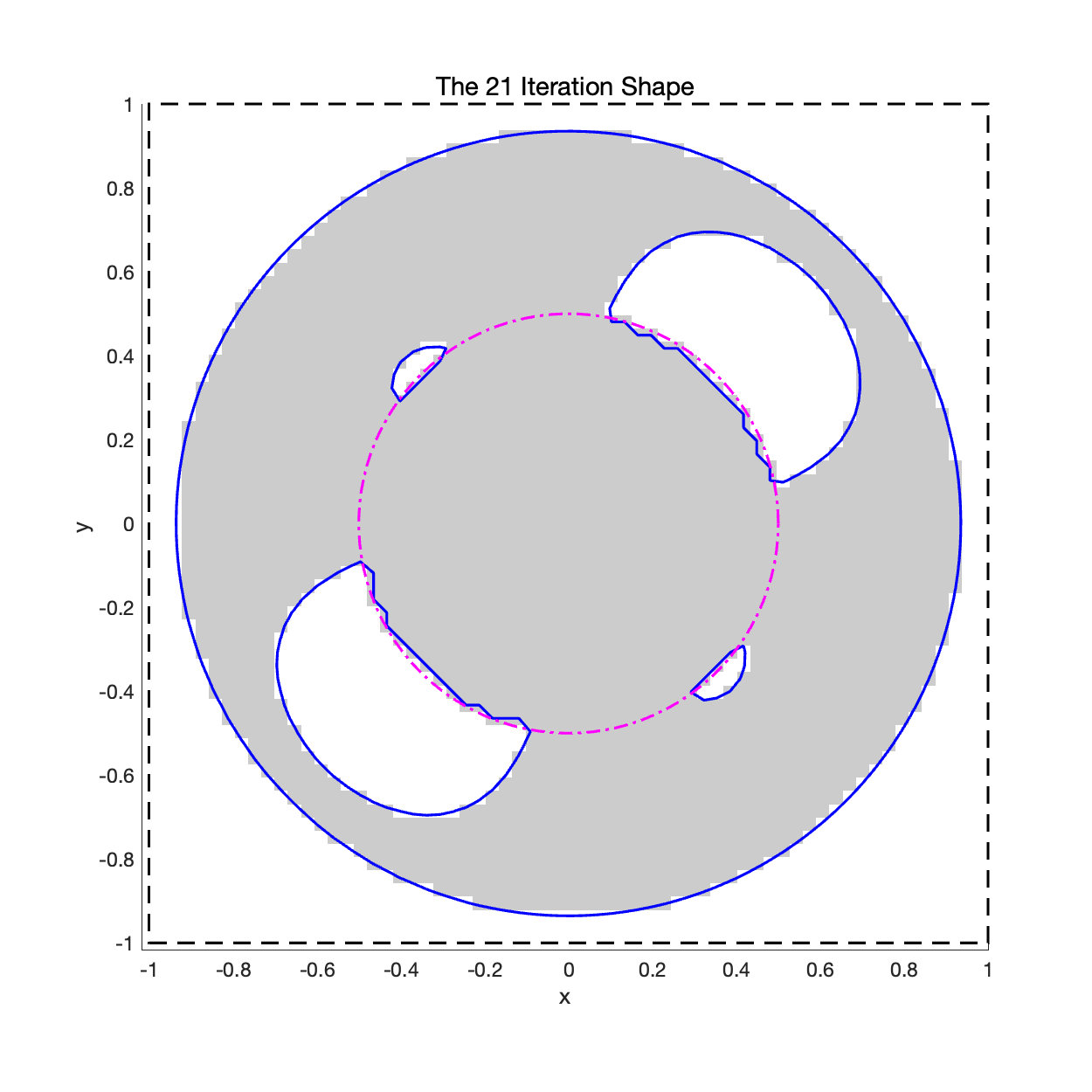}
\end{minipage}
\qquad
\begin{minipage}{0.325\linewidth}
\includegraphics[width=0.9\textwidth]{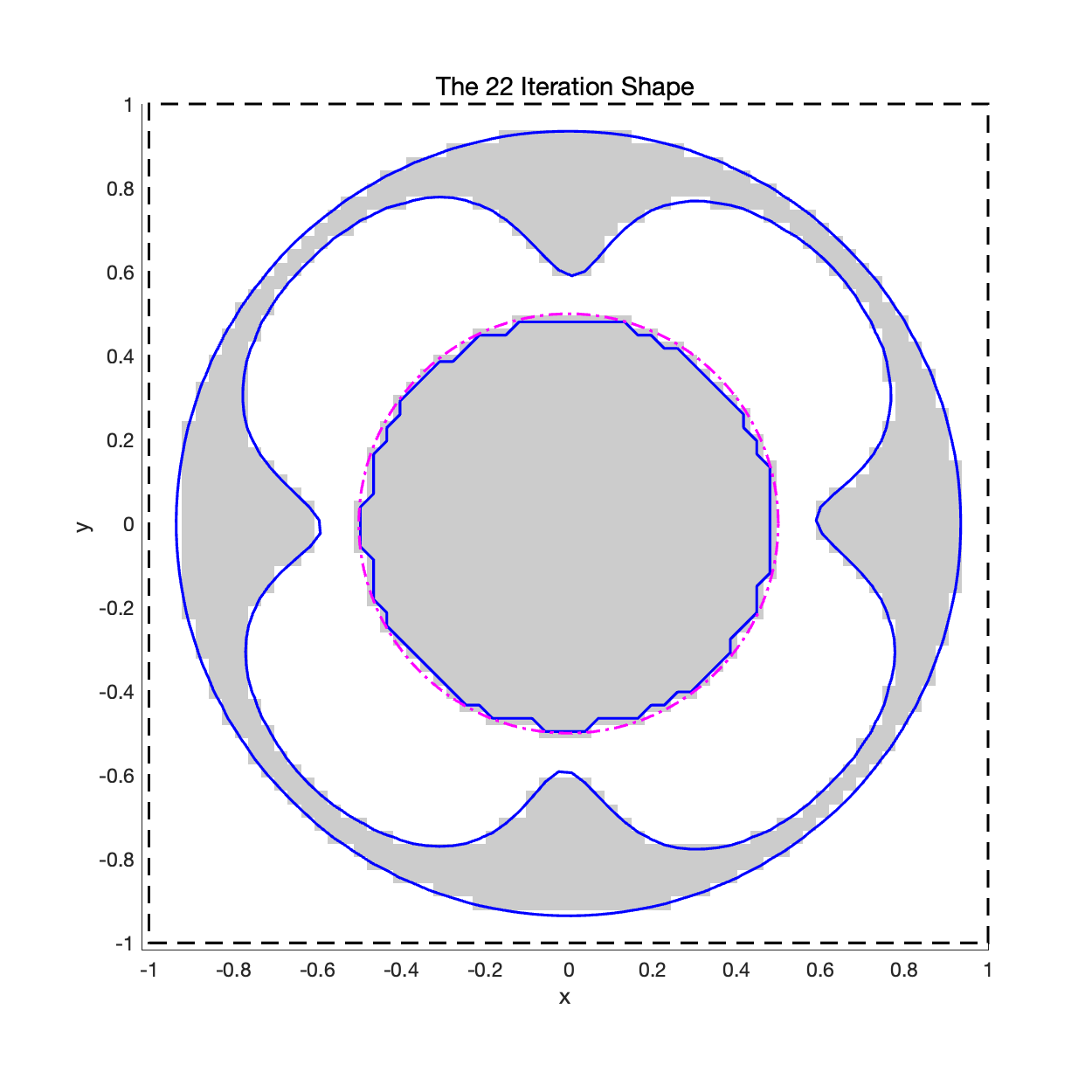}
\end{minipage}
\begin{minipage}{0.325\linewidth}
\includegraphics[width=0.9\textwidth]{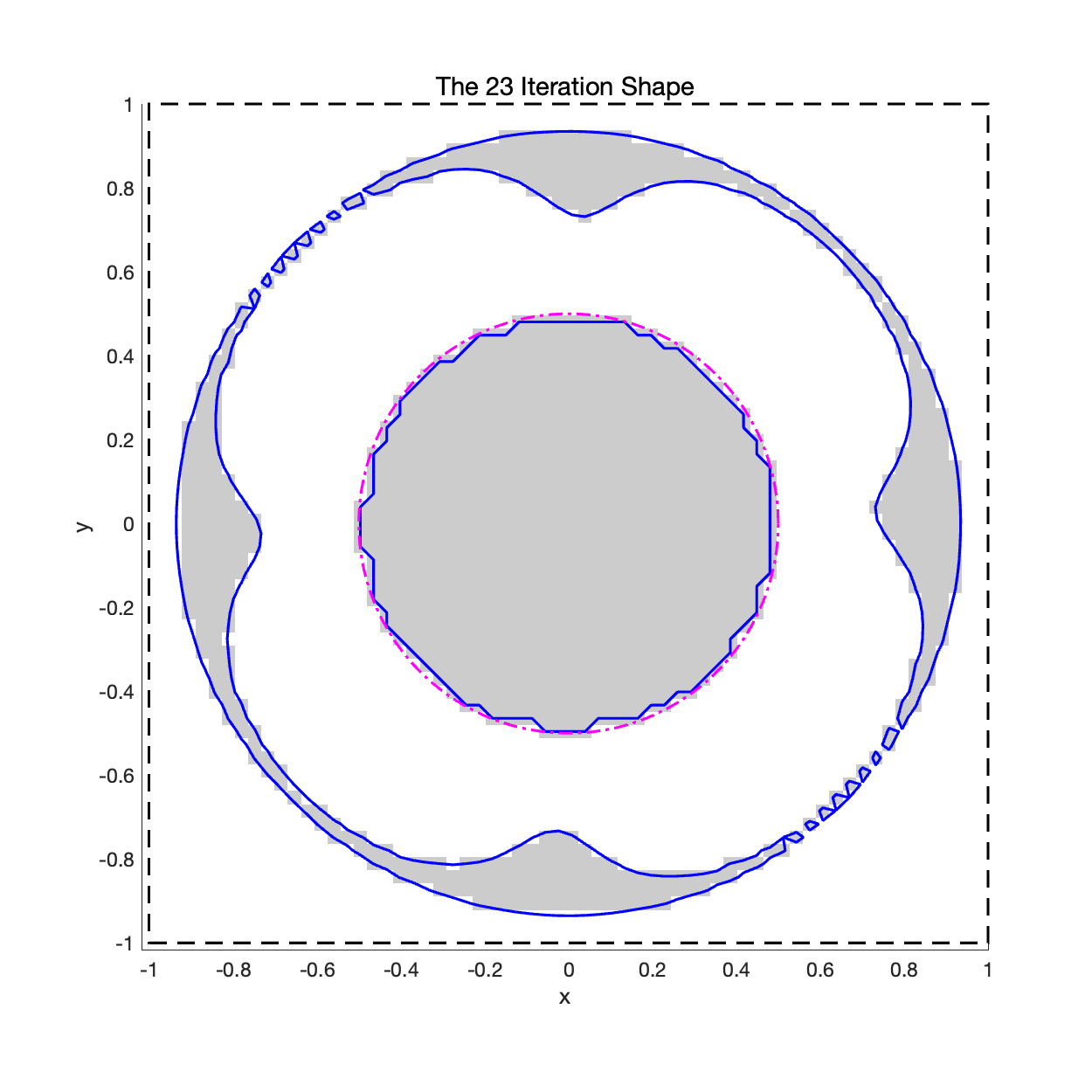}
\end{minipage}
\begin{minipage}{0.325\linewidth}
\includegraphics[width=0.9\textwidth]{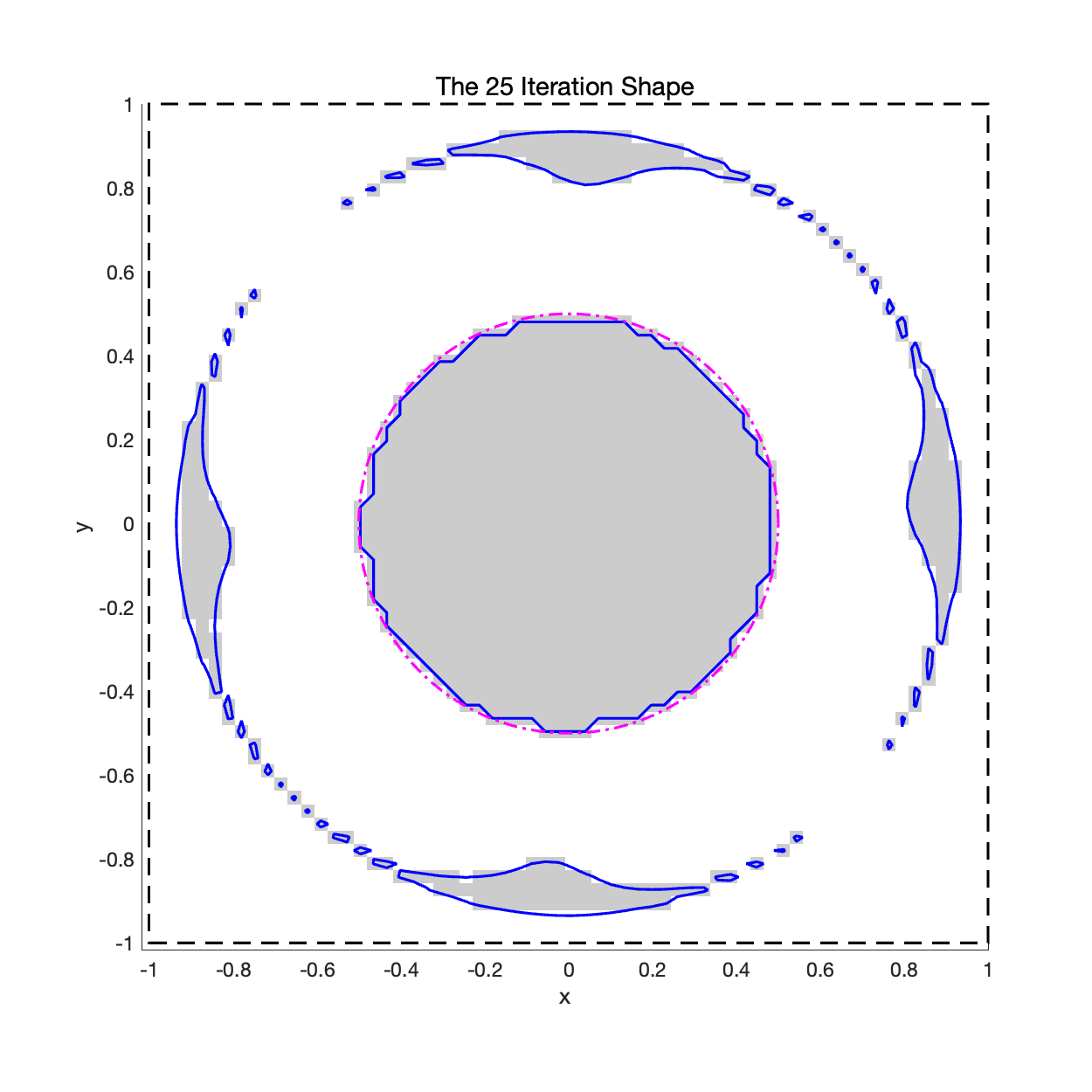}
\end{minipage}
\caption{The iterative shapes with boundary of steps 7, 8, 21, 22, 23 and 25.}
\label{fig:mediumshape3}
\end{center}
\end{figure}

\begin{figure}[H]
 \begin{center}
 \includegraphics[width=0.9\textwidth]{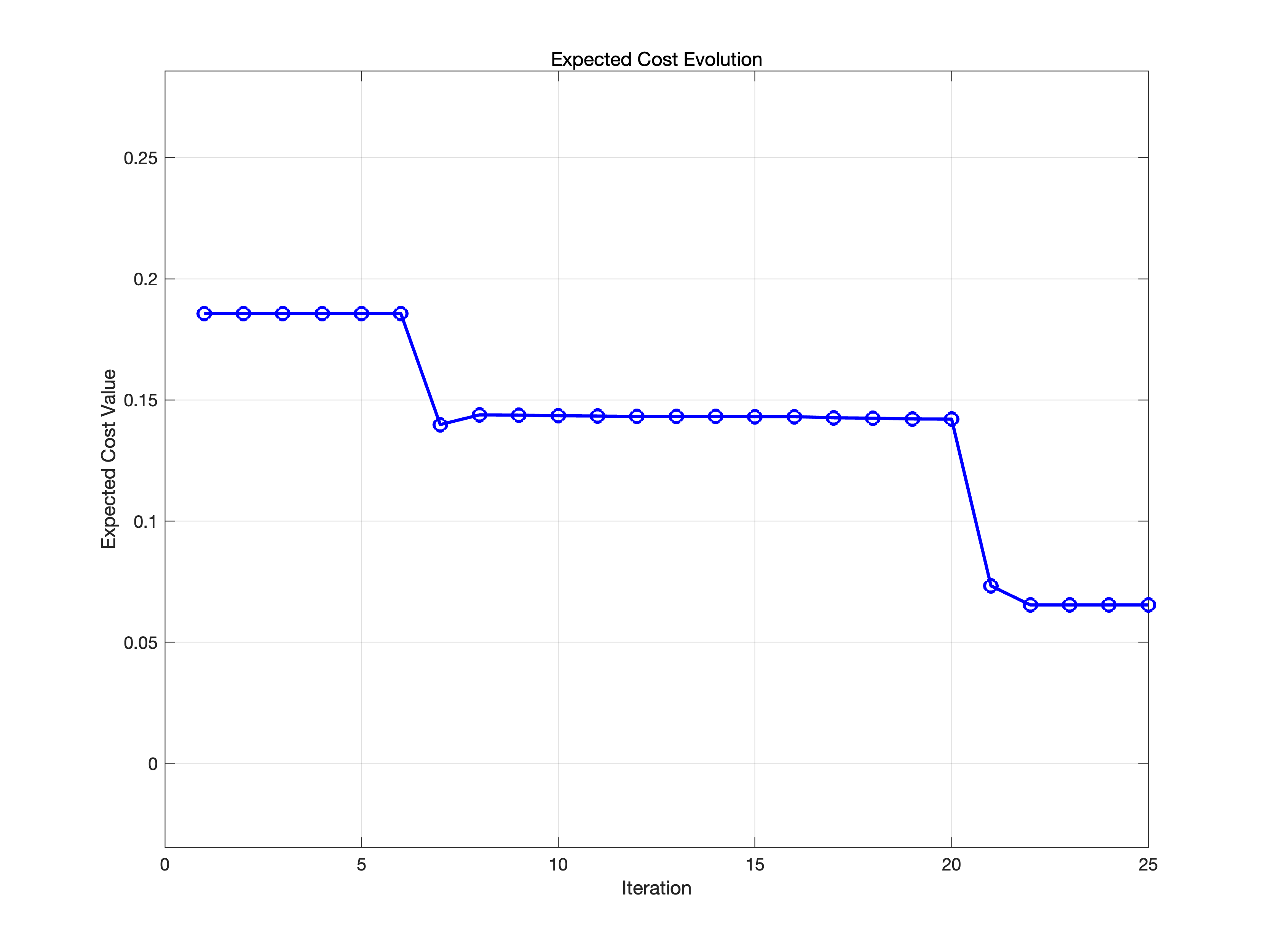}
 \caption{The variations of expected cost cure with iterations of Example \ref{ex2}.}
 \label{fig:ex3expectedcost}
 \end{center}
 \end{figure}

From the first three examples, it can be seen that regardless of whether region $O$ is a square region or a circular region, and regardless of whether the right-hand term is a constant function or not, the final shape  basically satisfies the theory that $O$ is the optimal shape, with only a small noise disturbance at the boundary. It can be seen from the change process that the shape change in the later stage of the iteration is relatively small.

\begin{example}\label{ex4}
For this example, we use a different objective functional, we restrict the integral domain is $K$, that is 
\begin{equation}
j(x,\omega,u(x,\omega))=\frac{1}{2}\mathbb{E}\left[\int_{K}(u-u_{d})^2dx\right]
\end{equation}
with  $u_{d}=-(x_{1}-\frac{1}{4})^2-(x_{2}-\frac{1}{4})^2+\frac{1}{25}$, or, equivalently, the objective functional is 
\begin{equation*} j(x,\omega,u(x,\omega))=\frac{1}{2}\mathbb{E}\left[\int_{D}H_{\epsilon}(g)\left(u-u_{d}\right)^2dx\right].
\end{equation*} 
The initial shape function is 
\begin{equation*} g^{0}(x)=\min\limits\left\{1-(x_{1}^2+x_{2}^{2}),x_{1}^{2}+x_{2}^{2}-\frac{1}{64},(x_{1}-\frac{1}{2})^{2}+x_{2}^{2}-\frac{1}{16}\right\},
\end{equation*} 
the force function $f=2$ and  the  restriction $g\geq 0$ in $O$ is not imposed. In the algorithm, we use the reduced descent direction  $d=-\mathbb{E}\left[\frac{1}{2}\left(u_{\epsilon}-u_{d}\right)^2+\frac{1}{\epsilon}u_{\epsilon}z\right]$.
\end{example}
We first show the initial shape in Figure~\ref{fig:initialshape}.
\begin{figure}[ht]
\begin{center}
\includegraphics[width=0.9\textwidth]{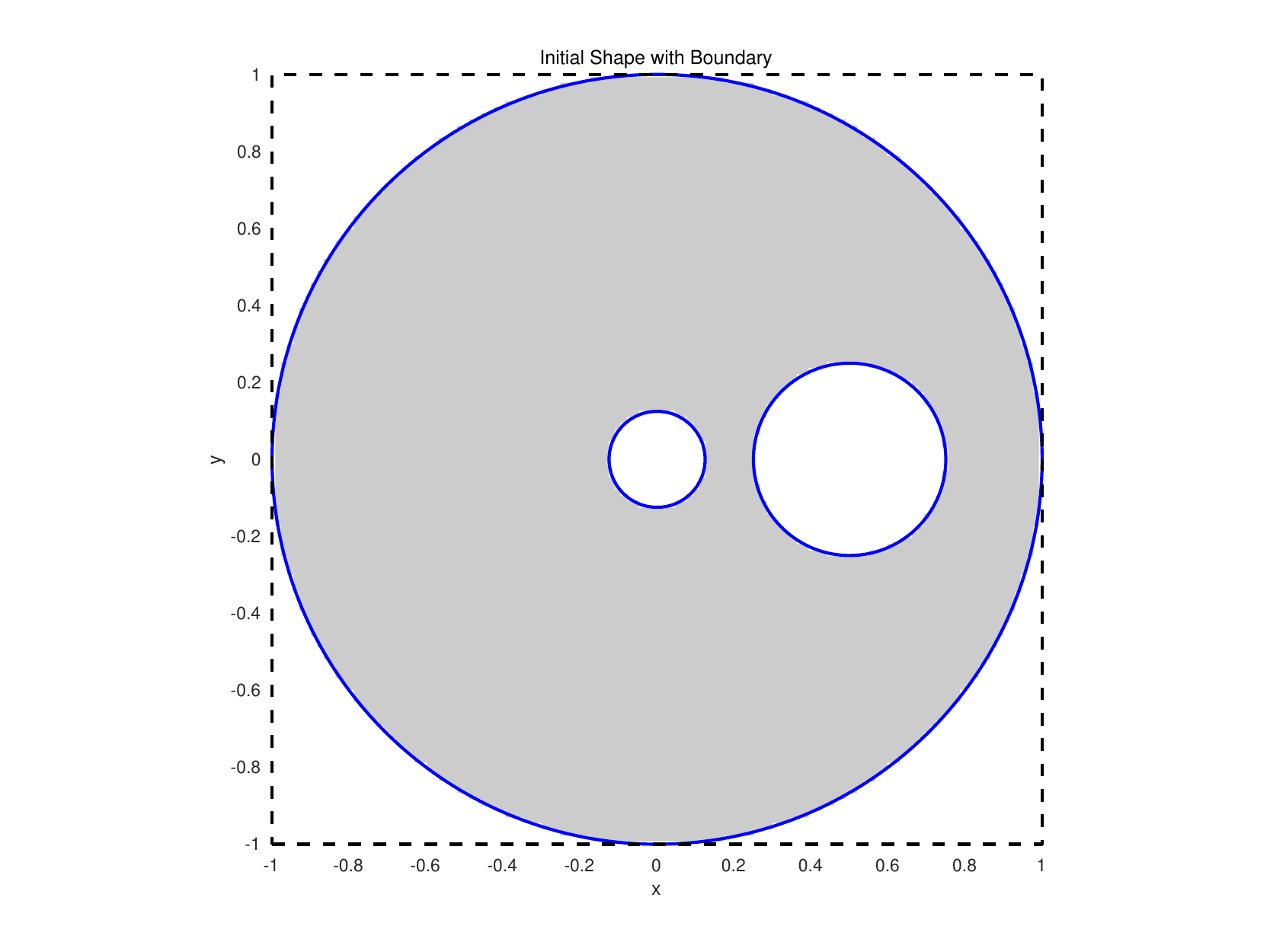}
\caption{The initial shape with boundary of Example~\ref{ex4}.}
\label{fig:initialshape}
\end{center}
\end{figure}

Then geometries of the intermediate step iteration are illustrated in Figure~\ref{fig:mediumshape}, where the  gray area stands for the region $g\geq 0$,  blue line denotes the boundary $g=0$ and the cavity is the region $g\leq 0$. 
 Figure~\ref{fig:expectedcost} show  the evolution process of expected costs with iterations. When the algorithm stops,  the final  absolute change of cost  value $|j(\bf{g}^{k+1})-j(\bf{g}^k)|$ is $9.1697e-13$, and the expected cost  value is $3.5e-5$. Actually, from the objective form, we can get the conclusion that the optimal shape is the empty set. From the perspective of the evolution process of shape, they are evolving towards an empty set.
\begin{figure}[H]
\begin{center}
\begin{minipage}{0.325\linewidth}
\includegraphics[width=0.9\textwidth]{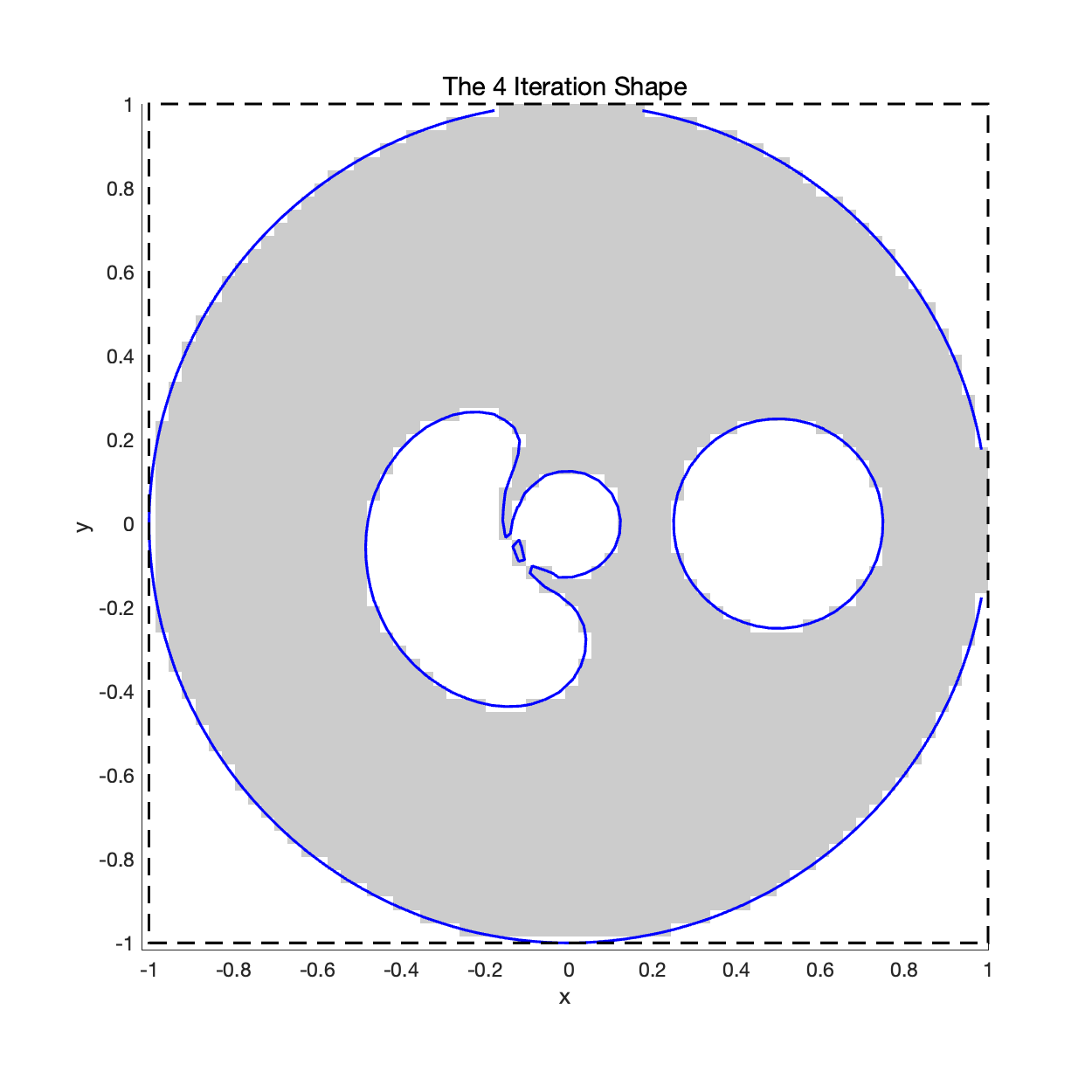}
\end{minipage}
\begin{minipage}{0.325\linewidth}
\includegraphics[width=0.9\textwidth]{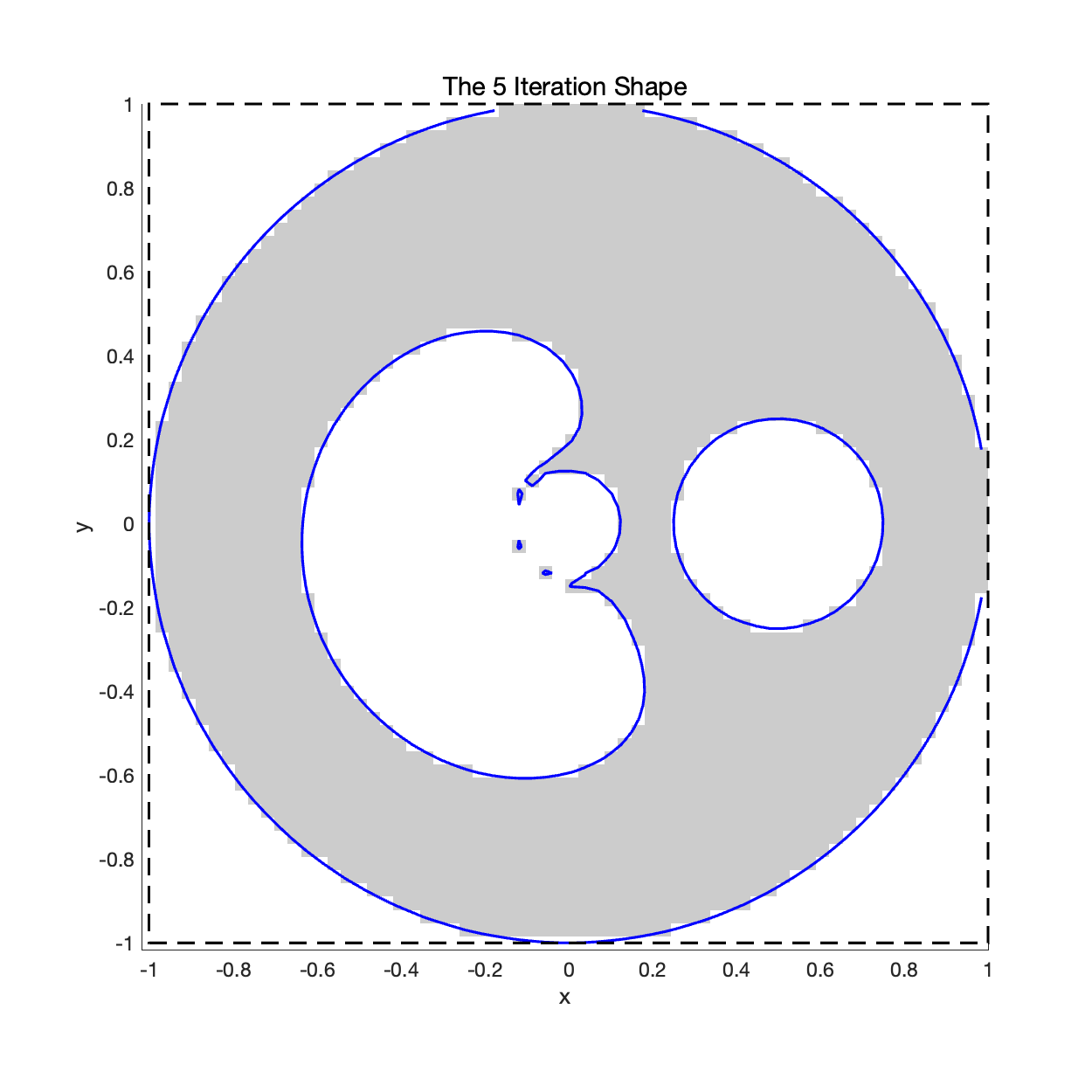}
\end{minipage}
\begin{minipage}{0.325\linewidth}
\includegraphics[width=0.9\textwidth]{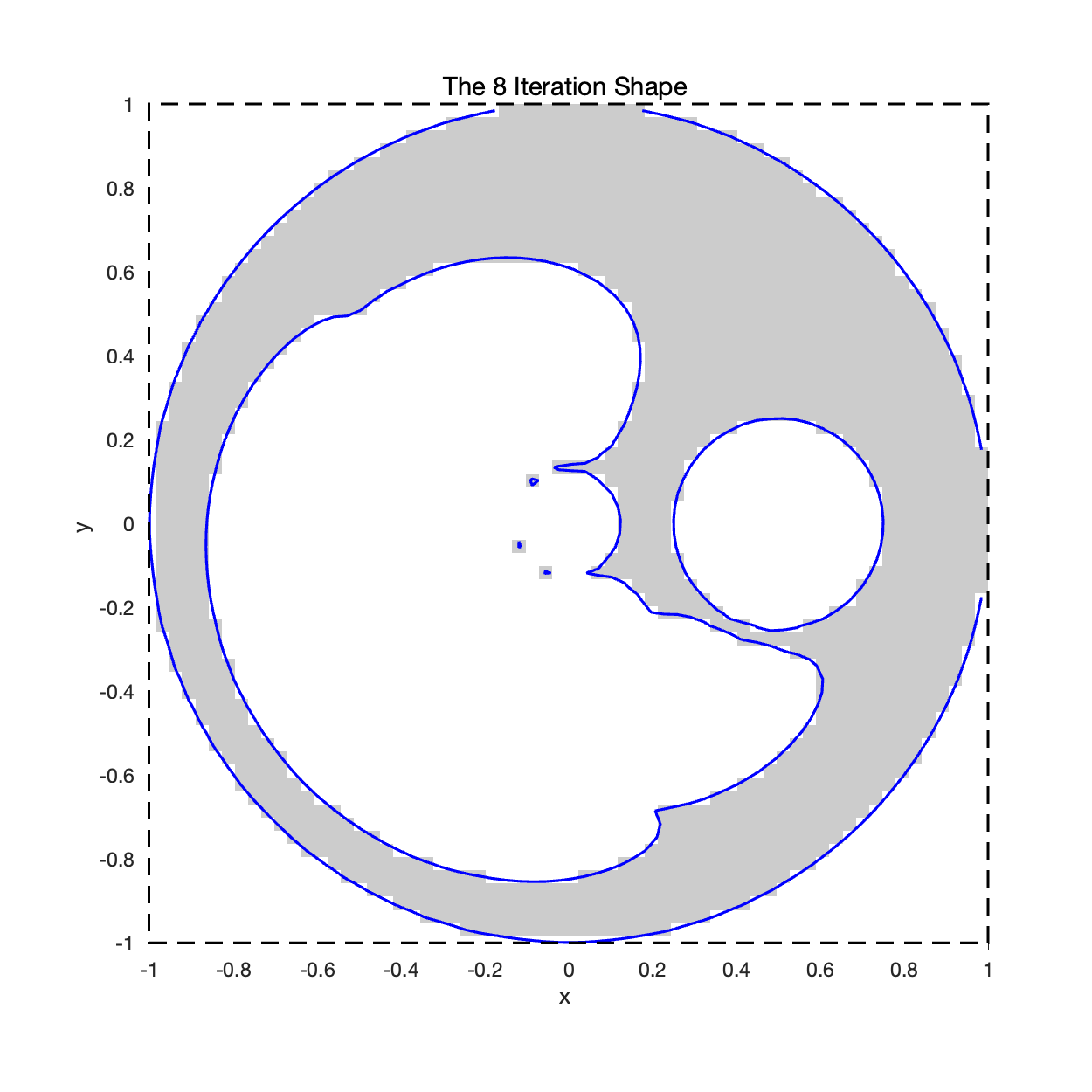}
\end{minipage}
\qquad
\begin{minipage}{0.325\linewidth}
\includegraphics[width=0.9\textwidth]{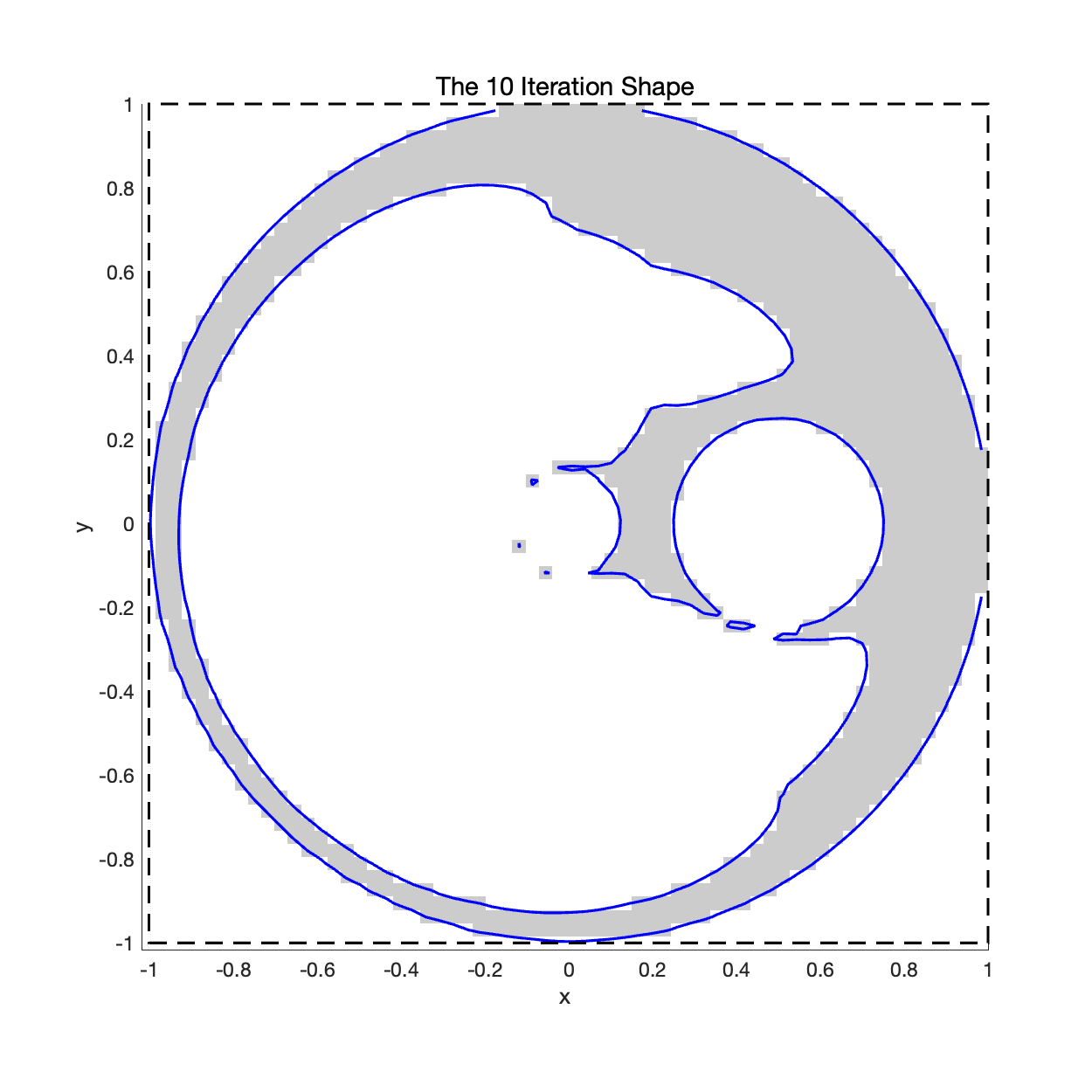}
\end{minipage}
\begin{minipage}{0.325\linewidth}
\includegraphics[width=0.9\textwidth]{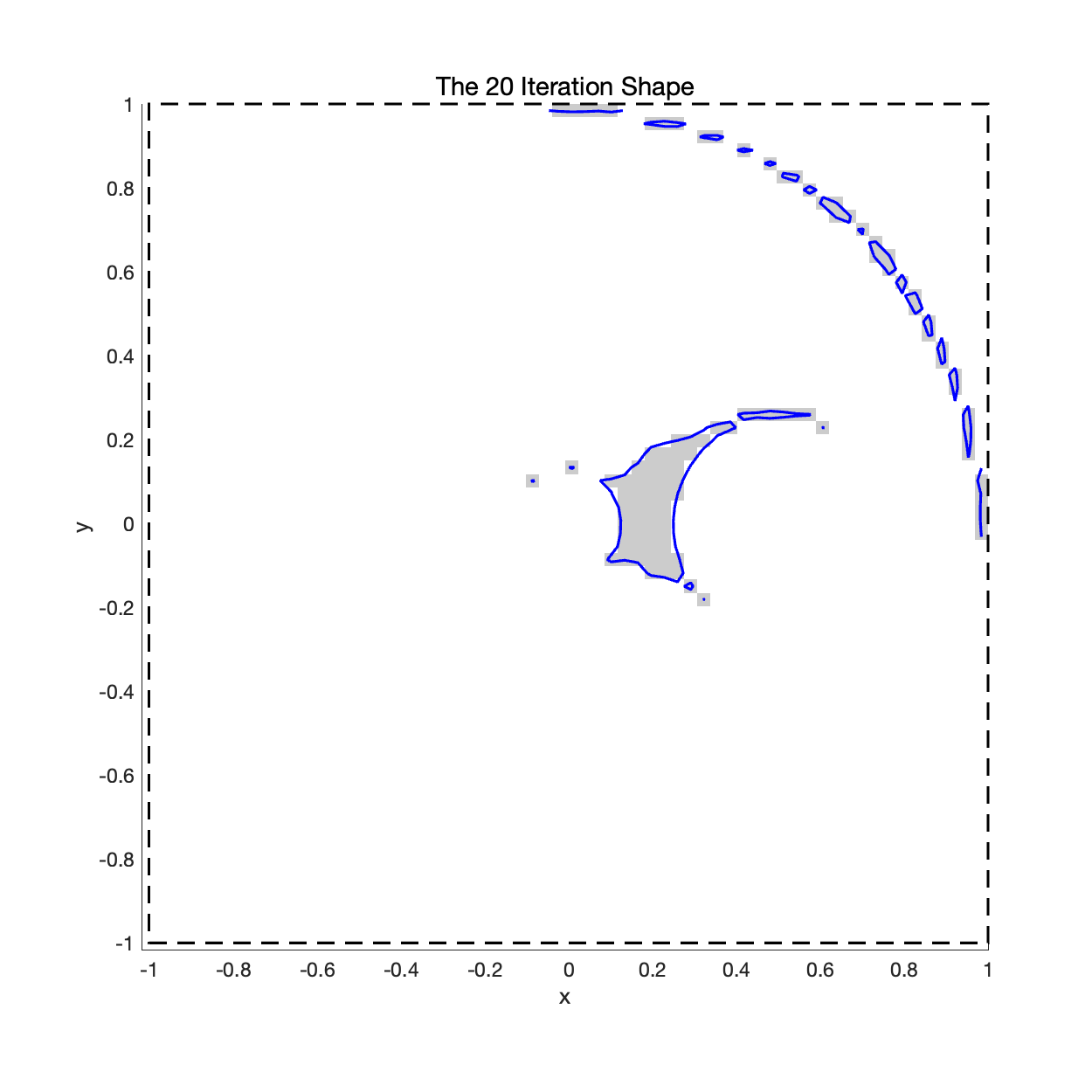}
\end{minipage}
\begin{minipage}{0.325\linewidth}
\includegraphics[width=0.9\textwidth]{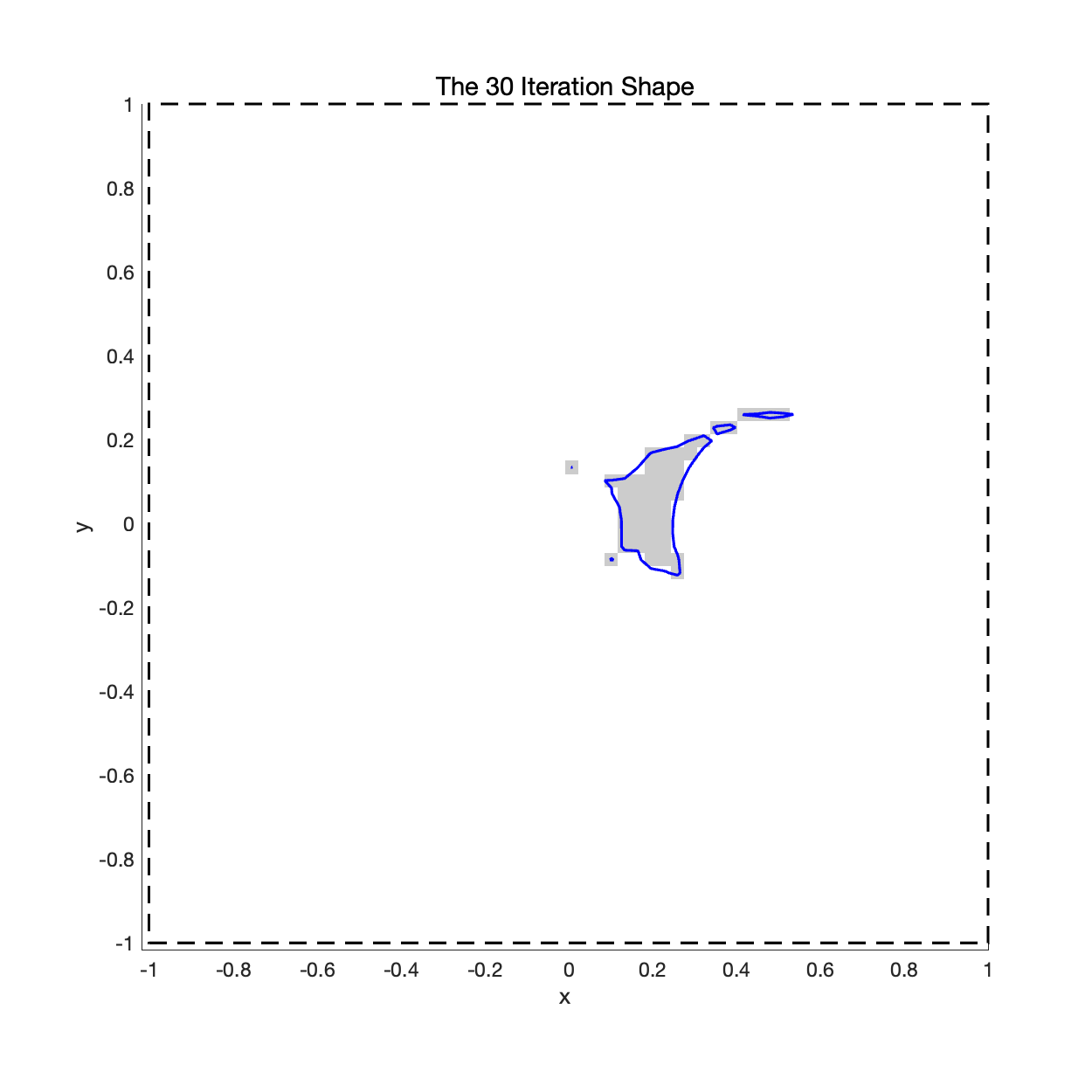}
\end{minipage}
\caption{The iterative shapes with boundary of steps 4, 5, 8, 10, 20 and 30.}
\label{fig:mediumshape}
\end{center}
\end{figure}
\begin{figure}[H]
\begin{center}
\includegraphics[width=0.9\textwidth]{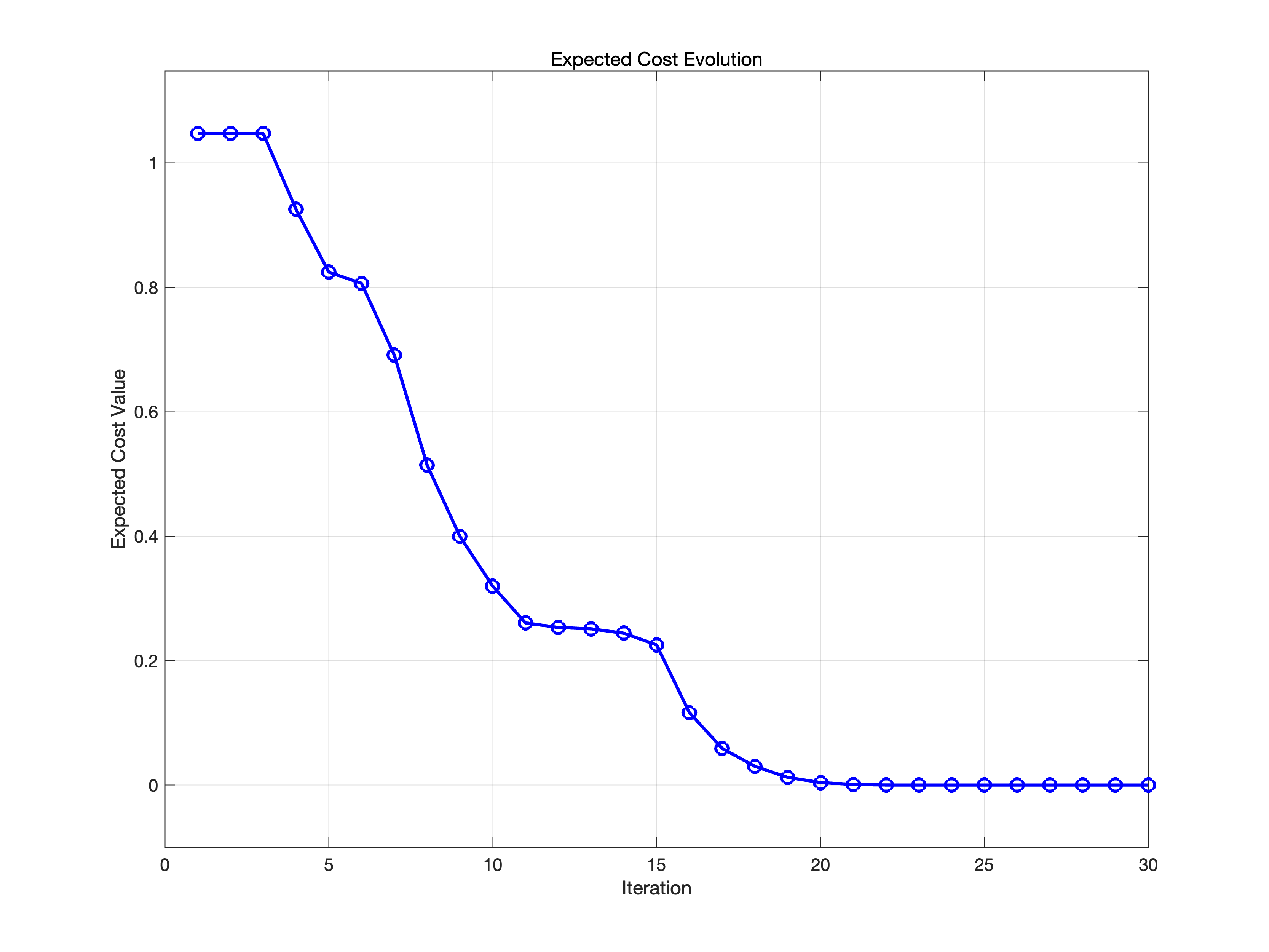}
\caption{The variations of expected cost cure with iterations of Example \ref{ex4}.}
\label{fig:expectedcost}
\end{center}
\end{figure}

\section{Conclusion}\label{sec5}
This paper presents an efficient discretize-then-optimize framework for elliptic PDE-constrained stochastic shape optimization with random diffusion coefficients. Element-wise stochasticity creates high-frequency noise requiring finite element method coupled with Monte Carlo method, where Monte Carlo method is applied to parallel compute the samples information.  While demonstrated on diffusion-based shape optimization, our approach can extend to any PDE-constrained optimization with uncertain parameters. 

\section*{Acknowledgements}
The author would like to thank Professor Tiba Dan for helpful discussions and supports to the completion of this paper. The work of X. Pang was supported by Science Foundation of Hebei Normal University  No. L2022B30 and China Scholarship Council No. 202308130193. The whole work was also funded by the Natural Science Foundation of Hebei Province No. A2023205045.

\end{document}